\crefname{equation}{}{}
\crefname{lem}{Lemma}{Lemmas}
\crefname{thm}{Theorem}{Theorems}
\crefname{assum}{Assumption}{Assumptions}
\crefname{rem}{Remark}{Remarks}
\crefname{line}{Line}{Lines}
\newcommand{\nm}[1]{\left\lVert {#1} \right\rVert}
\newcommand{\snm}[1]{\left\lvert {#1} \right\rvert}
\newcommand{\dual}[1]{\left\langle {#1} \right\rangle}
\newcommand{\argmin}[0]{ {\mathop{{\rm  argmin}}\,}}
\newcounter{mnote}
\let\oldmarginpar\marginpar
\renewcommand\marginpar[1]{\-\oldmarginpar[\raggedleft\footnotesize #1]%
	{\raggedright\footnotesize #1}}
\newcommand{\dd}{\,{\rm d}}
\newcommand{\R}{\,{\mathbb R}}
\newcolumntype{I}{!{\vrule width 1,5pt}}
\newlength\savedwidth
\newlength\savewidth
\newcommand{\uapd}[0]{ \mathtt{sub}\textrm{-}\mathtt{UAPD}}
\newcommand{\whk}[0]{\widehat{\lambda}_k}
\tikzset{elegant/.style={smooth,thick,samples=50,black}}
\tikzset{eaxis/.style={->,>=stealth}}
\begin{document}

\title{A universal accelerated primal-dual method for convex optimization problems\thanks{This work was supported by the Doctoral Starting up Foundation of  Chongqing Normal University (No. 202210000161).}}
	

\author{Hao Luo\thanks{National Center for Applied Mathematics in Chongqing, Chongqing Normal University, Chongqing, 401331, China (\email{luohao@cqnu.edu.cn})}
}

\headers{A universal accelerated primal-dual method}{A universal accelerated primal-dual method}


\maketitle

	\begin{abstract}
	This work presents a universal accelerated first-order primal-dual method for affinely constrained convex optimization problems. It can handle both Lipschitz and H\"{o}lder gradients but does not need to know the smoothness level of the objective function. In line search part, it uses dynamically decreasing parameters and produces approximate Lipschitz constant with moderate magnitude. In addition, based on a suitable discrete Lyapunov function and tight decay estimates of some differential/difference inequalities, a universal optimal mixed-type convergence rate is  established. Some numerical tests are provided to confirm the efficiency of the proposed method.
\end{abstract}

	\begin{keywords}
		Convex optimization, primal-dual method, mixed-type estimate, optimal complexity, Bregman divergence, Lyapunov function
	\end{keywords}
	
	\begin{AMS}
65B99,		68Q25, 90C25
	\end{AMS}

%
%

	\tableofcontents

\section{Introduction}
Consider the minimization problem 
\begin{equation}\label{eq:min-h-g-A}
	\min_{x\in Q}\, \left\{f(x) := h(x) + g(x):\,Ax = b\right\},
\end{equation}
where $(A,b)\in\R^{m\times  n}\times\R^m$, $Q\subset \R^n$ is a simple closed convex subset, and $f:\R^n\to\R\cup\{+\infty\}$ is properly closed and convex, with smooth part $h $ and nonsmooth part $g$. The model problem \cref{eq:min-h-g-A} arises from many practical applications, such as compressed sensing \cite{candes_comp_2006}, image processing \cite{chambolle_introduction_2016} and decentralized distributed optimization \cite{boyd_distributed_2010}. 

In the literature, existing algorithms mainly include Bregman iteration \cite{cai_linearized_2009,huang_accelerated_2013,yin_bregman_2008}, quadratic penalty method \cite{Lan2013,li_convergence_2017}, augmented Lagrangian method (ALM)  \cite{he_acceleration_2010,HE2022110547,he_inertial_2022,kang_inexact_2015,luo_primal-dual_2022,tao_accelerated_2016,tran-dinh_constrained_2014,tran-dinh_primal-dual_2015}, and alternating direction method of multipliers  \cite{goldfarb_fast_2013,goldstein_fast_2014,kadkhodaie_accelerated_2015,Li2019,luo_unified_2021,ouyang_accelerated_2015,sabach_faster_2022,tian_alternating_2018,tran-dinh_proximal_2019,tran-dinh_augmented_2018,tran-dinh_non-stationary_2020,Xu2017}. Generally speaking, these methods have sublinear rate $\mathcal O(1/k)$ for convex problems and can be further accelerated to $\mathcal O(1/k^2)$ for (partially) strongly convex objectives. We also note that primal-dual methods 
\cite{chambolle_first-order_2011,esser_general_2010,he_convergence_2014,jiang_approximate_2021,tran-dinh_unified_2021,giselsson_smoothing_2018,tran-dinh_smooth_2018,valkonen_inertial_2020} and operator splitting algorithms  \cite{davis_convergence_2016,eckstein_splitting_1989,eckstein_douglas-rachford_1992,bauschke_2019} can be applied to \cref{eq:min-h-g-A} with two-block structure.

However, among these works, it is rare to see the optimal {\it mixed-type} convergence rate, i.e., the lower complexity bound \cite{ouyang_lower_2021}
\begin{equation}\label{eq:opt-bd}
	\min\left\{ \frac{\nm{A}}{\epsilon},\,\frac{\nm{A}}{\sqrt{\mu\epsilon}}\right\}+
	\min\left\{\sqrt{L/\epsilon},\,\sqrt{L/\mu}\cdot\snm{\ln\epsilon}\right\},
\end{equation}
where $\mu\geq 0$ is the convexity parameter of $f$ and $L$ is the Lipschitz constant of $\nabla h$. Both Nesterov's smoothing technique \cite{nesterov_smooth_2005} and the accelerated primal-dual method in \cite{chen_optimal_2014} achieve the lower bound for convex case $\mu=0$. The inexact ALM framework in \cite{xu_iteration_2021} possesses the optimal complexity \cref{eq:opt-bd} but involves a subroutine for inexactly solving the subproblem.

We mention that the second part of \cref{eq:opt-bd} corresponds to the objective $f$ and agrees with the well-known lower complexity bound of first-order methods for solving unconstrained convex problems with Lipschitz gradients. The intermediate non-Lipschitz case is also of interest to be considered \cite{nbmirovskii_optimal_1985,nesterov_introductory_2004}. Particularly, when $\nabla f$ is H\"{o}lder continuous (cf.\cref{eq:holder-h}) with exponent $\nu\in[0,1)$, Nesterov \cite{nesterov_universal_2015} presented a universal fast gradient method (FGM) that did {\it not} require \`{a} priori knowledge of the smoothness parameter $\nu$ and the H\"{o}lderian constant $M_\nu(f)$. A key ingredient of FGM is that H\"{o}lderian gradients can be recast into the standard Lipschitz case but with inexact computations \cite{devolder_first-order_2014,stonyakin_gradient_2019,stonyakin_generalized_2022,stonyakin_inexact_2020}, and it achieves the 
optimal complexity \cite{nemirovsky_problem_1983}
\begin{equation}\label{eq:bd-f}
	\left(\frac{M_\nu(f)}{\epsilon}\right)^\frac{2}{1+3\nu}.
\end{equation}
More extensions of FGM can be found in \cite{guminov_universal_2019,guminov_primal-dual_2018,kamzolov_universal_2019}.

The dual problem of \cref{eq:min-h-g-A} reads equivalently as
\begin{equation}\label{eq:dual-phi}
	\min_{\lambda\in \R^{m}}\,\left\{\varphi(\lambda): = \dual{b,\lambda}+\max_{x\in Q}\left\{ \dual{-A^\top \lambda,x}-f(x)\right\}\right\}.
\end{equation}
If $f$ is uniformly convex of degree $p\geq 2$ (see \cite[Definition 1]{nesterov_universal_2015}), then $\nabla \varphi$ is H\"{o}lder continuous with exponent $\nu = 1/(p-1)$ (cf. \cite[Lemma 1]{nesterov_universal_2015}). The methods in \cite{dvurechensky_computational_2018,lin_efficient_2019} work for strongly convex problems, i.e., the Lipschitzian case ($\nu = 1$). Yurtsever et al. \cite{yurtsever_universal_2015} proposed an accelerated universal primal-dual gradient method (AccUniPDGrad) for general H\"{o}lderian case ($\nu<1$) and established the complexity bound \cref{eq:bd-f} for objective residual and feasibility violation, with $M_\nu(f)$ being replaced with $M_\nu(\varphi)$. Similarly with the spirit of FGM, the proposed method utilizes the ``inexactness'' property of $\nabla \varphi$ and applies FISTA \cite{beck_fast_2009} to \cref{eq:dual-phi} with a backtracking line search procedure.

In this work, we propose a universal accelerated primal-dual method (see \cref{algo:UAPD}) for solving \cref{eq:min-h-g-A}. Compared with existing works, the main contributions are highlighted as follows:
\begin{itemize}
	\item It is first-order black-box type for both Lipschitz and H\"{o}lder cases but does not need to know the smoothness level priorly. 
	\item It is equipped with the Bregman divergence and can handle the non-Euclidean setting.
	\item In line search part, it adopts dynamically decreasing tolerance while FGM \cite{nesterov_universal_2015} and AccUniPDGrad \cite{yurtsever_universal_2015} use the desired fixed accuracy.
	\item By using the tool of Lyapunov function and tight decay estimates of some differential/difference inequalities, we prove the universal mixed-type estimate that achieves the optimal complexity (including \cref{eq:opt-bd} as a special case).
\end{itemize}
We also provide some numerical tests to validate the practical performance. It is confirmed that: (i) the proper choice of Bregman distance is crucial indeed; (ii) our method outperforms FGM and AccUniPDGrad especially for non-Lipschitz problems and smooth problems with large Lipschitz constants, as the automatically decreasing tolerance leads to approximate Lipschitz constants with moderate magnitude. 

Our method here is motivated from an implicit-explicit time discretization of a novel accelerated Bregman primal-dual dynamics (see \cref{eq:apd-sys-app}), which is an extension of the previous accelerated primal-dual flow \cite{luo_acc_primal-dual_2021} to the non-Euclidean case. For unconstrained problems, there are some existing continuous dynamics \cite{krichene_accelerated_2015,wibisono_variational_2016,Wilson_2018} with Bregman divergence. For linearly constrained case, we see an accelerated primal-dual mirror model \cite{zhao_accelerated_2022}, which is inspired by the accelerated mirror descent \cite{krichene_accelerated_2015} and primal-dual dynamical approach \cite{feijer_stability_2010} but without numerical discretizations.

The rest of the paper is organized as follows. In \cref{sec:pre} we provide some preliminaries including Bregman divergence and H\"{o}lder continuity. Then the main algorithm together with its universal mixed-type estimate is presented in \cref{sec:pf-thm1}, and rigorous proofs of two technical lemmas are summarized in \cref{sec:pf-lem-diff-Lk,sec:pf-lem-est-tk}, respectively. Finally, some numerical results are reported in \cref{sec:num}.

\section{Preliminary}
\label{sec:pre}
\subsection{Notations}
Let $\dual{\cdot,\cdot}$ be the usual inner product of vectors and $\nm{\cdot}$ be the standard Euclidean norm (of vectors and matrices). 
Given a proper function $g:\R^n\to \R \cup\{+\infty\}$, the subdifferential of $g$ at any $x\in \R^n$ is the set of all subgradients:
\[
\partial g(x): = \left\{\xi\in \R^n:\,g(y)\geq g(x)+\dual{\xi,y-x}\quad\forall\,y\in \R^n\right\}.
\]
Recall that $Q\subset \R^n$ is a nonempty closed convex subset. We denote by  $\iota_Q(\cdot)$ the indicator function of $Q$ and let $N_Q(\cdot): =\partial \iota_Q(\cdot)$ be its normal cone.

Introduce the Lagrangian for the model problem \cref{eq:min-h-g-A}:
\[
\mathcal L(x,\lambda): = f(x) +\iota_Q(x)+ \dual{\lambda,Ax-b}\quad \forall\,(x,\lambda)\in\R^n\times \R^m.
\]
We say $(x^*,\lambda^*)\in Q\times \R^m$ is a saddle point of $\mathcal L$ if
\[
\mathcal L(x^*,\lambda)\leq \mathcal L(x^*,\lambda^*)\leq \mathcal L(x,\lambda^*)\quad \forall\,(x,\lambda)\in\R^n\times \R^m,
\]
which also implies the optimality condition:
\[
Ax^*-b=0,\quad \partial f(x^*) + N_Q(x^*)+A^\top \lambda^*\ni0.
\]
\subsection{Bregman divergence}
\label{sec:Breg}
Let $\phi:Q\to\R$ be a smooth {\it prox-function} and consider the corresponding
{\it Bregman divergence}
\begin{equation*}
	D_\phi(x,y) := \phi(x)-\phi(y)-\dual{\nabla \phi(y),x-y}\quad\forall\,x,\,y\in Q.
\end{equation*}
Suppose $\phi$ is $1$-strongly convex, which means 
\begin{equation}\label{eq:D-phi}
	D_\phi(x,y)\geq \frac{1}{2}\nm{x-y}^2\quad\forall\,x,\,y\in Q.
\end{equation}
Particularly, $\phi(x) = 1/2\nm{x}^2$ leads to $D_\phi(x,y)=D_\phi(y,x)=1/2\nm{x-y}^2$,
which boils down to the standard Euclidean setting.
In addition, 
we have the following {\it three-term identity};  see \cite[Lemma 3.2]{Chen-conv-1993} or \cite[Lemma 3.3]{Dvurechensky2021}.
\begin{lem}[\cite{Chen-conv-1993,Dvurechensky2021}]
	\label{lem:3-term-id}
	For any $x,y,z\in Q$, it holds that
	\begin{equation}\label{eq:3-term-id}
		\dual{\nabla \phi(x)-\nabla \phi(y),y-z}
		=	D_\phi(z,x) - 	D_\phi(z,y)- 	D_\phi(y,x).
	\end{equation}
	If $\phi(x) = \frac{1}{2}\nm{x}^2$, then 
	\begin{equation}\label{eq:3-term-id-quad}
		2\dual{x-y,y-z}
		=	\nm{x-z}^2 - 	\nm{y-z}^2- \nm{x-y}^2.
	\end{equation}
\end{lem}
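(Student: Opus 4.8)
\textbf{Proof proposal for \cref{lem:3-term-id}.} The plan is a direct expansion of the right-hand side of \cref{eq:3-term-id} using the definition of the Bregman divergence. Writing out
\[
D_\phi(z,x) = \phi(z)-\phi(x)-\dual{\nabla\phi(x),z-x},\quad
D_\phi(z,y) = \phi(z)-\phi(y)-\dual{\nabla\phi(y),z-y},
\]
\[
D_\phi(y,x) = \phi(y)-\phi(x)-\dual{\nabla\phi(x),y-x},
\]
I would form the combination $D_\phi(z,x)-D_\phi(z,y)-D_\phi(y,x)$ and observe that the scalar terms telescope: $\phi(z)$ cancels, $-\phi(x)$ cancels with $+\phi(x)$, and $+\phi(y)$ cancels with $-\phi(y)$. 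What remains is purely the three inner-product terms,
\[
-\dual{\nabla\phi(x),z-x}+\dual{\nabla\phi(y),z-y}+\dual{\nabla\phi(x),y-x}.
\]
Grouping the two $\nabla\phi(x)$ terms gives $\dual{\nabla\phi(x),(y-x)-(z-x)} = \dual{\nabla\phi(x),y-z}$, while the $\nabla\phi(y)$ term is $\dual{\nabla\phi(y),z-y} = -\dual{\nabla\phi(y),y-z}$. Adding these yields $\dual{\nabla\phi(x)-\nabla\phi(y),y-z}$, which is exactly the left-hand side of \cref{eq:3-term-id}.

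For the quadratic specialization \cref{eq:3-term-id-quad}, I would simply substitute $\phi(x)=\tfrac12\nm{x}^2$, so that $\nabla\phi(x)=x$ and (as already noted right before the lemma) $D_\phi(u,v)=\tfrac12\nm{u-v}^2$ for all $u,v$. Plugging these into \cref{eq:3-term-id} gives $\dual{x-y,y-z} = \tfrac12\nm{z-x}^2-\tfrac12\nm{z-y}^2-\tfrac12\nm{y-x}^2$, and multiplying through by $2$ and using $\nm{z-x}=\nm{x-z}$, $\nm{y-x}=\nm{x-y}$ produces \cref{eq:3-term-id-quad}.

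There is no real obstacle here: the argument is a one-line telescoping cancellation followed by regrouping of linear terms. The only point requiring mild care is tracking the signs in the cancellation of the $\dual{\nabla\phi(\cdot),\cdot}$ terms, in particular rewriting $z-y$ and $y-x$ consistently in terms of $y-z$ so that the common factor $\nabla\phi(x)-\nabla\phi(y)$ emerges cleanly. Alternatively, one could cite \cite[Lemma 3.2]{Chen-conv-1993} or \cite[Lemma 3.3]{Dvurechensky2021} directly, but the self-contained computation above is short enough to include in full.
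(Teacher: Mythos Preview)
Your proposal is correct and is exactly the standard telescoping computation used to prove this identity. The paper itself does not supply a proof of \cref{lem:3-term-id}; it simply cites \cite[Lemma 3.2]{Chen-conv-1993} and \cite[Lemma 3.3]{Dvurechensky2021}, whose arguments are the same direct expansion you outline.
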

\subsection{H\"{o}lder continuity}
Let $h$ be any differentiable function on $Q$. For $0\leq \nu\leq 1$, define 
\[
M_\nu(h) : = \sup_{\substack{x\neq y\\x,\,y\in Q}}\frac{\nm{\nabla h(x)-\nabla h(y)}}{\nm{x-y}^\nu}.
\]
If $M_\nu(h)<\infty$, then $\nabla h$ is H\"{o}lder continuous with exponent $\nu$:
\begin{equation}\label{eq:holder-h}
	\nm{\nabla h(x)-\nabla h(y)}\leq M_\nu(h)\nm{x-y}^\nu
	\quad\forall\,x,y\in Q,
\end{equation}
and this also implies 
\begin{equation}\label{eq:holder}
	h(x)\leq h(y)+\dual{\nabla h(y),x-y}+\frac{M_\nu(h)}{1+\nu}\nm{x-y}^{1+\nu}
	\quad\forall\,x,y\in Q.
\end{equation}
For $\nu = 1$, $M_1(h)$ corresponds to the Lipschitz constant of $\nabla h$, and we also use the conventional notation $L_h = M_1(h)$.

According to \cite[Lemma 2]{nesterov_universal_2015}, the estimate \cref{eq:holder} can be transferred into the usual gradient descent inequality, with ``inexact computations''. Based on this, (accelerated) gradient methods can be used to minimize functions with H\"{o}lder continuous gradients \cite{devolder_first-order_2014,stonyakin_gradient_2019,stonyakin_generalized_2022,stonyakin_inexact_2020}. 
\begin{prop}[\cite{nesterov_universal_2015}]
	\label{prop:M-delta}	
	Assume $M_\nu(h)<\infty$ and define
	\begin{equation}\label{eq:M-nu-delta}
		M(\nu,\delta): =\delta^{\frac{\nu-1}{\nu+1}}[M_\nu(h)]^{\frac{2}{\nu+1}}
		\quad\forall\,\delta>0.
	\end{equation}
	Then for any $M\geq M(\nu,\delta)$, we have
	\begin{equation*}
		h(x)\leq h(y)+\dual{\nabla h(y),x-y}
		+\frac{M}{2}\nm{x-y}^2+\frac{\delta}{2}
		\quad\forall\,x,y\in Q.
	\end{equation*}
\end{prop}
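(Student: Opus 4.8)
The plan is to derive the claimed inequality directly from the Hölder descent estimate \cref{eq:holder} by absorbing the superquadratic term $\frac{M_\nu(h)}{1+\nu}\nm{x-y}^{1+\nu}$ into a genuinely quadratic term plus the slack $\delta/2$, using a weighted arithmetic--geometric (Young) inequality. Setting $t := \nm{x-y}\ge 0$ and recalling \cref{eq:holder}, it suffices to prove the scalar statement: whenever $M\ge M(\nu,\delta)$,
\[
\frac{M_\nu(h)}{1+\nu}\, t^{1+\nu}\ \le\ \frac{M}{2}\,t^2+\frac{\delta}{2}\qquad\forall\, t\ge 0 .
\]
The degenerate cases are immediate: if $t=0$ both sides vanish; if $M_\nu(h)=0$ the left-hand side is $0$ (and then $M(\nu,\delta)=0$); and if $\nu=1$ then $M(1,\delta)=M_1(h)$ by \cref{eq:M-nu-delta}, so \cref{eq:holder} already has the desired quadratic form. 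Hence one may assume $\nu\in[0,1)$, $t>0$, $M_\nu(h)>0$, so that $M\ge M(\nu,\delta)>0$.

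For $\nu\in[0,1)$ the main step is weighted AM--GM with the weights $\frac{1+\nu}{2}$ and $\frac{1-\nu}{2}$ (nonnegative, summing to $1$): for every $s>0$,
\[
t^{1+\nu}=\bigl(s\,t^2\bigr)^{\frac{1+\nu}{2}}\bigl(s^{-\frac{1+\nu}{1-\nu}}\bigr)^{\frac{1-\nu}{2}}
\ \le\ \frac{1+\nu}{2}\,s\,t^2+\frac{1-\nu}{2}\,s^{-\frac{1+\nu}{1-\nu}} .
\]
Multiplying through by $\frac{M_\nu(h)}{1+\nu}$ and then choosing $s=M/M_\nu(h)$ makes the quadratic coefficient exactly $M/2$, while the residual constant simplifies, using $1+\frac{1+\nu}{1-\nu}=\frac{2}{1-\nu}$, to $\frac{1-\nu}{2(1+\nu)}\,[M_\nu(h)]^{\frac{2}{1-\nu}}M^{-\frac{1+\nu}{1-\nu}}$. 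Demanding this residual to be at most $\delta/2$ is, after rearrangement, equivalent to $M\ge\bigl(\tfrac{1-\nu}{1+\nu}\bigr)^{\frac{1-\nu}{1+\nu}}\delta^{\frac{\nu-1}{\nu+1}}[M_\nu(h)]^{\frac{2}{\nu+1}}$; since $\tfrac{1-\nu}{1+\nu}\in[0,1]$ and the exponent is nonnegative, the prefactor is $\le 1$, so this holds as soon as $M\ge M(\nu,\delta)$ with $M(\nu,\delta)$ as in \cref{eq:M-nu-delta}. Plugging the resulting bound back into \cref{eq:holder} yields the proposition.

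The only point that requires care is the exponent bookkeeping in the AM--GM step and the verification that the clean choice \cref{eq:M-nu-delta} -- which discards the harmless factor $(\tfrac{1-\nu}{1+\nu})^{(1-\nu)/(1+\nu)}\le1$ present in the sharp threshold -- is still sufficient; there is no analytic obstacle beyond this. An equivalent route is to apply Young's inequality $ab\le a^p/p+b^q/q$ with conjugate exponents $p=\frac{2}{1+\nu}$, $q=\frac{2}{1-\nu}$ to the product $\frac{M_\nu(h)}{1+\nu}t^{1+\nu}=(\alpha t^{1+\nu})\cdot(\alpha^{-1}\tfrac{M_\nu(h)}{1+\nu})$, tuning $\alpha$ so the first term equals $\frac{M}{2}t^2$; this produces the same computation.
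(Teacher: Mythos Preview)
Your proof is correct. The paper itself does not prove this proposition; it is quoted verbatim from \cite{nesterov_universal_2015} (Lemma~2 there) without argument. Your route---reducing via \cref{eq:holder} to the scalar inequality $\frac{M_\nu(h)}{1+\nu}t^{1+\nu}\le \frac{M}{2}t^2+\frac{\delta}{2}$ and then applying a weighted AM--GM/Young inequality with exponents $\tfrac{2}{1+\nu},\tfrac{2}{1-\nu}$---is exactly Nesterov's original argument, including the observation that the sharp threshold carries the extra factor $\bigl(\tfrac{1-\nu}{1+\nu}\bigr)^{\frac{1-\nu}{1+\nu}}\le 1$, which is discarded to obtain the cleaner definition \cref{eq:M-nu-delta}.
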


\section{Main Algorithm}
\label{sec:pf-thm1}
Throughout, we make the following assumption on $f = h+g$:
\begin{assum}\label{assum:h-g}
	The nonsmooth part	$g$ is properly closed and convex on $Q$. 
	The smooth part $h$ satisfies $	\inf_{0\leq \nu\leq 1}M_\nu(h)<\infty$ and is $\mu$-convex on $Q$ with $\mu\geq0$, i.e.,
	\[
	h(x)\geq h(y)+\dual{\nabla h(y),x-y}+\mu D_\phi(x,y)
	\quad\forall\,x,y\in Q.
	\]
\end{assum}

\begin{algorithm}[H]
	\caption{Universal Accelerated Primal-Dual (UAPD) Method}
	\label{algo:UAPD}
	\begin{algorithmic}[1] 
		\REQUIRE  $\beta_0 =1,\,\gamma_0,\,M_{0}>0,\,\mu\geq 0$ and $\nm{A}$.
		\STATE Initialization: $x_0,\,v_0\in Q$ and $\lambda_0\in \R^m$.
		\FOR{$k=0,1,\cdots$}
		\STATE Set $i_k = 0,\,M_{k,0} = M_{k}$ and $S_k=\{x_k,v_k,\lambda_k,\beta_k,\gamma_k\}$.
		\STATE $(y_{k,i_k},x_{k,i_k},v_{k,i_k},\alpha_{k,i_k},\delta_{k,i_k},\Delta_{k,i_k}) =\uapd(k,S_k,M_{k,i_k})$.\label{algo:sub-uapd}
		\WHILE[Line search]{$h(x_{k,i_k})-\Delta_{k,i_k}> \delta_{k,i_k}/2$}		
		\label{algo:line-search}		
		\STATE Set $i_k = i_k + 1$ and $M_{k,i_k} = 2^{i_k}M_{k,0}$.		
		\STATE $(y_{k,i_k},x_{k,i_k},v_{k,i_k},\alpha_{k,i_k},\delta_{k,i_k},\Delta_{k,i_k})
		=\uapd(k,S_k,M_{k,i_k})$.
		\label{algo:sub-uapd-2}
		\ENDWHILE
		\STATE Set $\alpha_k =  \alpha_{k,i_k},\,M_{k+1} = M_{k,i_k}$ and $\delta_{k+1} = \delta_{k,i_{k}}$.
		\STATE Update $\gamma_{k+1} = (\gamma_k+\mu\alpha_k)/(1+\alpha_k)$ and $\beta_{k+1} = \beta_k/(1+\alpha_k)$.
		\STATE Update $x_{k+1} =x_{k,i_k},v_{k+1} =v_{k,i_k}$ and $\lambda_{k+1}=
		\lambda_k+\alpha_k/\beta_k\left(
		Av_{k+1}-b
		\right)$.
		\ENDFOR
	\end{algorithmic}
\end{algorithm}
\begin{algorithm}[H]
	\caption{$(\widetilde{y}_k,\widetilde{x}_k,\widetilde{v}_k,\widetilde{\alpha}_k,\widetilde{\delta}_k,\widetilde{\Delta}_k) =\uapd(k,S_k,\widetilde{M}_{k})$}
	\label{algo:UAPD-inner}
	\begin{algorithmic}[1] 
		\REQUIRE $k\in\mathbb N,\,\widetilde{M}_k>0$ and $S_{k} = \{x_k,v_k,\lambda_k,\beta_k,\gamma_k\}$.
		\STATE Choose the step size $\widetilde{\alpha}_k = \sqrt{\beta_k\gamma_k}/\sqrt{\beta_k\widetilde{M}_k
			+\nm{A}^2}$.
		\STATE Set $\widetilde{\beta}_k =\beta_k/(1+\widetilde{\alpha}_k)$ and $\widetilde{\delta}_k =  \widetilde{\beta}_k/(k+1)$.
		\STATE Set $\widetilde{y}_k={}(x_k+\widetilde{\alpha}_kv_k)/(1+\widetilde{\alpha}_k)$ and $\widetilde{\lambda}_k
		=\lambda_k+\widetilde{\alpha}_k/\beta_k\left(
		Av_k-b		\right)$.
		\STATE Update $\displaystyle  \widetilde{x}_k={}(x_k+\widetilde{\alpha}_k\widetilde{v}_k)/(1+\widetilde{\alpha}_k)$ with
		\[
		\widetilde{v}_k = \mathop{\argmin}\limits_{v\in Q}\left\{
		g(v)+
		\big\langle  \nabla h(\widetilde{y}_k)+A^\top\widetilde{\lambda}_k,v\big\rangle +\mu D_\phi(v,\widetilde{y}_k)+ \frac{\gamma_k}{\widetilde{\alpha}_k}D_\phi(v,v_k)
		\right\}.
		\]
		\STATE Compute $		\widetilde{\Delta}_k = h(\widetilde{y}_k)+\dual{\nabla h(\widetilde{y}_k),\widetilde{x}_k-\widetilde{y}_k} + \frac{\widetilde{M}_k}{2}\nm{\widetilde{x}_k-\widetilde{y}_k}^2$.
	\end{algorithmic}
\end{algorithm}

Our main algorithm, called Universal Accelerated Primal-Dual (UAPD) method, is summarized in \cref{algo:UAPD}, where the subpart $\uapd$ in lines \ref{algo:sub-uapd} and \ref{algo:sub-uapd-2} has been given by \cref{algo:UAPD-inner}. Note that we do {\it not} require priorly the smoothness constant $M_\nu(h)$ but perform a line search procedure. 
\subsection{Line search}
From line \ref{algo:line-search} of \cref{algo:UAPD}, we find that $i_k$ is the smallest integer such that 
\[\small
\begin{aligned}
	h(x_{k,i_k})\leq h(y_{k,i_k})+\dual{\nabla h(y_{k,i_k}),x_{k,i_k}-y_{k,i_k}} + \frac{M_{k,i_k}}{2}\nm{x_{k,i_k}-y_{k,i_k}}^2+\frac{\delta_{k,i_{k}}}{2}.
\end{aligned}
\]
We claim that $i_k$ is finite for each $k\in\mathbb N$. Indeed, $M_{k,i_k} = 2^{i_k}M_{k,0}$ increases as $i_k$ does, and the step size 
\begin{equation}\label{eq:ak-ik}
	\alpha_{k,i_k} = \frac{\sqrt{\beta_k\gamma_{k}}}{\sqrt{\beta_k M_{k,i_k} + \nm{A}^2}}
\end{equation}
has to be decreasing. Thus the tolerance
\begin{equation}\label{eq:deltak-ik}
	\delta_{k,i_k} = \frac{1}{k+1}\cdot\frac{\beta_{k}}{1+\alpha_{k,i_k}}
\end{equation}
is increasing and by \cref{eq:M-nu-delta}, $M(\nu,\delta_{k,i_k})$ is decreasing . This together with \cref{prop:M-delta,assum:h-g} concludes that either $i_k = 0$ or $1\leq i_k\leq s^*+1$ where $s^*\geq 0$ solves $M_{k,s^*} = M(\nu,\delta_{k,s^*})$; see \cref{fig:Mik}. Moreover, we notice that 
\[
M_{k,s^*}= 2^{s^*}M_{k,0}\leq M(\nu,\delta_{k,0})\quad\Longrightarrow\quad 
s^*\leq \log_2\frac{M(\nu,\delta_{k,0})}{M_{k,0}}<\infty.
\]

\begin{figure}[H]
	\centering
	\begin{tikzpicture}
		\draw[eaxis] (-1.8,0) -- (7,0) node[below] {$s$};
		\draw[eaxis] (0,-1) -- (0,5.2) node[left] {};
		
		\path (-0.2,0) node(axis0)[below]{$O$};	

		\path (3,3.6)node(axis0)[left]{\small $M_{k,s}$};	
		
		\path (4.5,1.6)node(axis0)[left]{\small $M(\nu,\delta_{k,s})$};	
		
		\path (0,2)node(axis0)[left]{\small $M_{k,0}$};	
		\path (0,1+2.5)node(axis0)[left]{\small $M(\nu,\delta_{k,0})$};	
		\path (0,1)node(axis0)[left]{\small$M(\nu,\delta_{k,\infty})$};	
		\draw[dashed,black] (0,1) -- (6,1); 
		
		\path (0.9,0)   node(axis0)[below]{$s^*$};	
		\draw[dashed,black] (0.9,0) -- (0.9,2.35); 
		
		\fill (0.9,2.35) circle (1.3pt);
		
		\draw[elegant,domain=0:4] plot(\x,{2*1.2^\x});
		\draw[elegant,domain=0:6] plot(\x,{1+2.5/2^\x});	
	\end{tikzpicture}
	
	\caption{Illustrations of $M_{k,s}$ and $M(\nu,\delta_{k,s})$ as functions of $s\in[0,\infty)$. Here $\delta_{k,\infty} =\lim\limits_{s\to\infty}\delta_{k,s}= \beta_k/(k+1)$ since $\alpha_{k,s}\to0$ as $s\to\infty$.}
	\label{fig:Mik}
\end{figure}
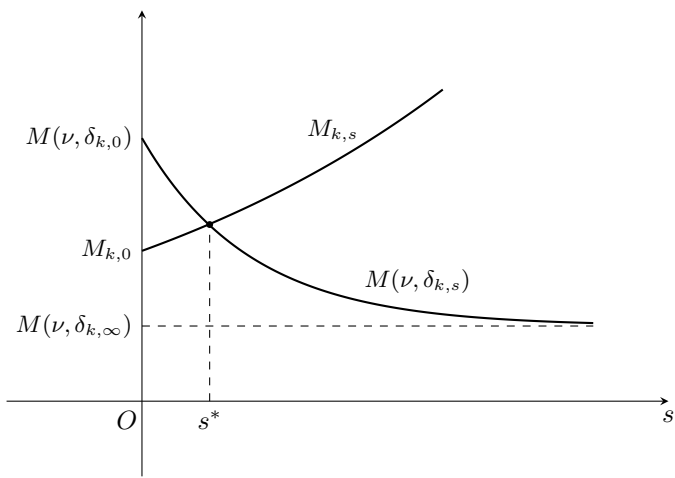
\begin{rem}
	\label{rem:compare-line-search}	
	In the line search part, \cref{algo:UAPD} adopts dynamically decreasing tolerance \cref{eq:deltak-ik}, i.e., $\delta_k = \beta_{k}/k$. However, the methods in  \cite{nesterov_universal_2015} and \cite[Algorithm 2]{yurtsever_universal_2015} chose $\delta_k=\epsilon/k$, where $\epsilon$ is the desired accuracy. Hence, by \cref{prop:M-delta}, the approximate smoothness constant $M_k$ of our algorithm is smaller than these two methods, especially for H\"{o}lderian case. This will be verified by numerical experiments.
\end{rem}

Below, we give an upper bound of $M_k$ and the total number of line search steps. By \cref{thm:conv}, $\beta_k$ corresponds to the convergence rate of \cref{algo:UAPD} and admits explicit decay estimate with respect to $k$ (see \cref{lem:est-tk}). If the desired accuracy $\beta_{k+1}=\mathcal O(\epsilon)$ is given, then the term $\snm{\log_2\beta_{k+1}}$ in \cref{eq:lg2-Mnu} can also be replaced by $\snm{\log_2\epsilon}$.
\begin{lem}
	\label{lem:Mk}
	For any $k\in\mathbb N$, we have
	\begin{equation}\label{eq:Mk}
		M_{k+1}\leq \max\left\{ 2\sqrt{2}M(\nu,\delta_{k+1}),\,M_{0}\right\},
	\end{equation}
	and consequently, it holds that 
	\begin{equation}\label{eq:bd-ik}
		\sum_{j=0}^{k}i_j\leq k+1+\max\left\{1,  \log_2\frac{M(\nu,\delta_{k+1})}{M_{0}/2\sqrt{2}}\right\},
	\end{equation}
	where 
	\begin{equation}\label{eq:lg2-Mnu}\small
		\log_2\frac{M(\nu,\delta_{k+1})}{M_{0}/2\sqrt{2}}= 
		\log_2\frac{[M_\nu(h)]^{\frac{2}{1+\nu}}}{M_0/2\sqrt{2}}+
		\frac{1-\nu}{1+\nu}\left(	 \log_2(k+1)+\snm{\log_2\beta_{k+1}}\right).
	\end{equation}
\end{lem}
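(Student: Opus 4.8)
The plan is to establish \cref{eq:Mk} first by induction on $k$, then derive \cref{eq:bd-ik} as a telescoping consequence, and finally verify the closed-form expression \cref{eq:lg2-Mnu} by direct substitution. For the induction, the base case uses $M_1 = M_{0,i_0} = 2^{i_0}M_0$: if $i_0 = 0$ then $M_1 = M_0$ and we are done; otherwise, as explained in the line-search discussion before the lemma, $i_0 \le s^* + 1$ where $M_{0,s^*} = M(\nu,\delta_{0,s^*})$, so $M_{0,s^*} = 2^{s^*}M_0$ and hence $M_1 = 2^{i_0}M_0 \le 2^{s^*+1}M_0 = 2 M_{0,s^*} = 2 M(\nu,\delta_{0,s^*})$. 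The remaining task is to upgrade the argument $\delta_{0,s^*}$ (a quantity internal to iteration $0$) to the final $\delta_1$, paying a factor $\sqrt 2$: since $M(\nu,\cdot)$ is monotone in $\delta$ with exponent $\frac{\nu-1}{\nu+1} \in (-1,0]$, and $\delta_{0,s^*} \ge \delta_{0,0} = \beta_0/(1+\alpha_{0,0})$ while $\delta_1 = \delta_{0,i_0} \le \delta_{0,\infty} = \beta_0/(k+1)$, I bound the ratio $M(\nu,\delta_{0,s^*})/M(\nu,\delta_1)$ by a constant. The factor $2\sqrt 2$ in \cref{eq:Mk} suggests the intended bound is $M(\nu,\delta_{0,s^*}) \le \sqrt 2\, M(\nu,\delta_1)$, coming from $\delta_1/\delta_{0,s^*} \le 2$ (one extra halving in the tolerance when passing from the accepted inner index to the next outer iteration) combined with $(\delta_1/\delta_{0,s^*})^{\frac{1-\nu}{1+\nu}} \le 2^{1/1} = 2$ in the worst case $\nu=0$ — I will need to chase the precise constants through \cref{eq:deltak-ik} and the update $\delta_{k+1} = \delta_{k,i_k}$, which is the main bookkeeping obstacle. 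The inductive step is identical in form: $M_{k+1} = 2^{i_k}M_{k,0} = 2^{i_k}M_k$; if $i_k = 0$ use the inductive hypothesis on $M_k$, and if $i_k \ge 1$ repeat the $s^*$-argument.

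For \cref{eq:bd-ik}, I sum $i_j$ for $j = 0,\dots,k$. The key observation is that $M_{j+1} = 2^{i_j} M_j$, so $\sum_{j=0}^k i_j = \log_2(M_{k+1}/M_0)$, but this is not quite a telescoping into the claimed bound because $M_{k+1}$ is not monotone in $k$. Instead I argue more crudely: whenever $i_j \ge 1$ we have $M_{j+1} = 2^{i_j}M_j \ge 2 M_j$, and at such steps the line search ended because $M_{j,i_j-1} < M(\nu,\delta_{j,i_j-1}) \le M(\nu,\delta_{j,\infty})$, giving $M_{j+1} = 2 M_{j,i_j-1} < 2 M(\nu,\delta_{j,\infty})$; combined with the fact that at most one extra ``unit'' per step accrues from the $i_j = 0$ entries (each contributing $0$), I split $\sum i_j = \#\{j: i_j \ge 1\} + \sum_{j: i_j\ge 1}(i_j - 1)$. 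The first count is at most $k+1$; the second is controlled by $\log_2$ of the largest $M(\nu,\delta_{j,0})/M_0$ over $j\le k$, which by monotonicity of $\delta_{j,0}$ in $j$ (via the $\beta_j$ decay, \cref{lem:est-tk}) is dominated by $\log_2(M(\nu,\delta_{k+1})/(M_0/2\sqrt2))$ after applying \cref{eq:Mk}. The $\max\{1,\cdot\}$ absorbs the degenerate case where this logarithm is negative or zero. I expect this to be the most delicate step, because one must be careful that the relevant $M(\nu,\delta)$ appearing across different iterations is uniformly bounded by the single quantity $M(\nu,\delta_{k+1})$, which requires knowing $\delta_j \le \delta_{k+1}$ for $j \le k+1$ — i.e., monotonicity of $\delta_k = \beta_k/k$ — and here I would invoke that $\beta_k$ decreases while the denominator increases.

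Finally, \cref{eq:lg2-Mnu} is pure algebra: plug \cref{eq:M-nu-delta} with $\delta = \delta_{k+1}$ into $\log_2\!\big(M(\nu,\delta_{k+1})\big/(M_0/2\sqrt2)\big)$, use $M(\nu,\delta_{k+1}) = \delta_{k+1}^{\frac{\nu-1}{\nu+1}}[M_\nu(h)]^{\frac{2}{\nu+1}}$, and substitute $\delta_{k+1} = \beta_{k+1}/(k+1)$ (note $\delta_{k+1} = \delta_{k,i_k}$ with the accepted step, and by Line 2 of \cref{algo:UAPD-inner} this equals $\widetilde\beta_k/(k+1) = \beta_{k+1}/(k+1)$). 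Then $\log_2 \delta_{k+1}^{\frac{\nu-1}{\nu+1}} = \frac{\nu-1}{\nu+1}(\log_2\beta_{k+1} - \log_2(k+1)) = \frac{1-\nu}{1+\nu}(\log_2(k+1) + |\log_2\beta_{k+1}|)$, using that $\beta_{k+1} \le 1$ so $\log_2\beta_{k+1} = -|\log_2\beta_{k+1}|$. Collecting terms and moving $\log_2(M_0/2\sqrt2)$ into the constant term $\log_2\!\big([M_\nu(h)]^{2/(1+\nu)}/(M_0/2\sqrt2)\big)$ yields \cref{eq:lg2-Mnu} exactly. This last part is routine; the only subtlety is confirming $\beta_{k+1}\le 1$, which follows from $\beta_0 = 1$ and $\beta_{k+1} = \beta_k/(1+\alpha_k) \le \beta_k$.
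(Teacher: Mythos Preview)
For \cref{eq:Mk}, your $s^*$-based argument can in principle be made to work but is circuitous, and the paper's route is more direct and cleanly resolves the $\sqrt 2$ you were unsure about. Rather than invoking the continuous crossing point $s^*$, the paper works with the last \emph{failed} line-search index: if $i_k\ge 1$, the test did not pass at $i_k-1$, so by the contrapositive of \cref{prop:M-delta} necessarily $M_{k,i_k-1}\le M(\nu,\delta_{k,i_k-1})$, giving $M_{k+1}=2M_{k,i_k-1}\le 2M(\nu,\delta_{k,i_k-1})$. The extra $\sqrt 2$ then drops out of the step-size formula \cref{eq:ak-ik}: since $M_{k,i_k}=2M_{k,i_k-1}$, one has $\alpha_{k,i_k-1}\le\sqrt 2\,\alpha_{k,i_k}$, hence $1+\alpha_{k,i_k-1}\le\sqrt 2\,(1+\alpha_{k,i_k})$ and $\delta_{k,i_k-1}\ge\delta_{k+1}/\sqrt 2$ via \cref{eq:deltak-ik}, so $M(\nu,\delta_{k,i_k-1})\le(\sqrt 2)^{(1-\nu)/(1+\nu)}M(\nu,\delta_{k+1})\le\sqrt 2\,M(\nu,\delta_{k+1})$. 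This yields the one-step bound $M_{k+1}\le\max\{2\sqrt 2\,M(\nu,\delta_{k+1}),M_k\}$; unrolling in $k$ and using $M(\nu,\delta_j)\le M(\nu,\delta_{k+1})$ for $j\le k+1$ (monotonicity of $\delta_j=\beta_j/j$) reduces the $M_k$ branch to $M_0$.

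For \cref{eq:bd-ik}, your worry about monotonicity of $M_{k+1}$ is unfounded and the splitting $\sum i_j=\#\{j:i_j\ge1\}+\sum_{i_j\ge1}(i_j-1)$ is an unnecessary detour. The telescoping identity you wrote down \emph{is} the paper's proof: from $M_{j+1}=2^{i_j}M_j$ one gets $\sum_{j=0}^k i_j=\log_2(M_{k+1}/M_0)$ (the paper records $i_j=1+\log_2(M_{j+1}/M_j)$, producing the additive $k+1$; either form gives the stated bound), and then \cref{eq:Mk} bounds the single terminal value $M_{k+1}$ directly --- no monotonicity in $k$ is needed since only $M_{k+1}$ enters. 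Your treatment of \cref{eq:lg2-Mnu} is correct and matches the paper.
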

\begin{proof}
	See \cref{sec:app-Mk}.
\end{proof}
\subsection{Time discretization interpretation}
Below, we provide a time discretization interpretation of \cref{algo:UAPD}. Given the $k$-th iterations $(x_k,v_k,\lambda_k)$ and the parameters $(\gamma_k,\beta_k,M_{k})$, the line search procedure produces $(y_k,x_{k+1},v_{k+1})$ that satisfy
\begin{subnumcases}{\label{eq:pc-imex}}
	\label{eq:pc-imex-y}
	\frac{y_k-x_k}{\alpha_k} ={} v_k-y_k,\\
	\label{eq:pc-imex-v}
	\gamma_{k}\frac{\nabla\phi(v_{k+1})-\nabla\phi(v_k)}{\alpha_k}\in{}
	\mu\left[\nabla\phi(y_k)-\nabla\phi(v_{k+1})\right]-\mathcal G(y_k,v_{k+1},\lambda_k),\\
	\label{eq:pc-imex-x}
	\frac{x_{k+1}-x_k}{\alpha_k} ={} v_{k+1}-x_{k+1},\\
	\label{eq:pc-imex-l}
	\beta_{k}\frac{\lambda_{k+1}-\lambda_k}{\alpha_k}={}Av_{k+1}-b,
\end{subnumcases}
where $\mathcal G(y_k,v_{k+1},\lambda_k):=\nabla h(y_k)+\partial g(v_{k+1})+N_Q(v_{k+1})+A^\top \whk$
with $\whk = \lambda_k+\alpha_k/\beta_k(Av_k-b)$, and the step size  $\alpha_k$ solves (cf.\cref{eq:ak-ik})
\begin{equation}\label{eq:ak}
	\alpha_k^2(\beta_{k}M_{k+1}
	+\nm{A}^2)=  \gamma_k\beta_k.
\end{equation} 
Besides, $y_k$ and $x_{k+1}$ satisfy
\begin{equation}\label{eq:xk1-yk}
	h(x_{k+1})\leq  h(y_k)+\dual{\nabla h(y_k),x_{k+1}-y_k}+\frac{M_{k+1}}{2}\nm{x_{k+1}-y_k}^2+\frac{\delta_{k+1}}{2},
\end{equation}
and the parameters $(\gamma_{k+1},\beta_{k+1})$ are governed by 
\begin{equation}\label{eq:gama-betak}
	\frac{\gamma_{k+1}-\gamma_k}{\alpha_k} = {}\mu-\gamma_{k+1},\quad 	\frac{\beta_{k+1}-\beta_k}{\alpha_k} = -\beta_{k+1},
\end{equation}
with $\beta_0 = 1$ and $\gamma_0>0$.

As one can see, $y_k$ in \eqref{eq:pc-imex-y} is an intermediate which provides a ``prediction'', and then the ``correction'' step \eqref{eq:pc-imex-x} is used to update $x_{k+1}$.
From \eqref{eq:pc-imex-y}, \eqref{eq:pc-imex-v}, and \eqref{eq:pc-imex-x}, it is not hard to find that $y_k,\,v_{k+1},\,x_{k+1}\in Q$, as long as $x_k,v_k\in Q$. Therefore, with $x_0,v_0\in Q$, it holds that $\{x_k,y_k,v_k\}_{k\in\mathbb N}\subset Q$. 

Furthermore, 
we mention that the reformulation \cref{eq:pc-imex} admits an implicit-explicit time discretization for the following primal-dual dynamics:
\begin{equation}\label{eq:apd-sys-app}
	\left\{
	\begin{aligned}
		{}&		x' = v-x,\\
		{}&		\gamma \frac{\dd}{\dd t} \nabla \phi(v){}\in \mu (\nabla\phi(x)-\nabla\phi(v))-\big(\partial f(x)+N_Q(x)+A^\top \lambda\big),\\
		{}&		\beta \lambda' {}= Av-b,
	\end{aligned}
	\right.
\end{equation}
where $\gamma$ and $\beta$ are governed by continuous  analogues to \cref{eq:gama-betak}:
\begin{equation}\label{eq:gama-beta}
	\gamma' = \mu-\gamma,\quad \beta' =-\beta.
\end{equation}
We call \cref{eq:apd-sys-app} the {\it Accelerated Bregman Primal-Dual} (ABPD) flow.
In the standard Euclidean setting $\phi(x) = 1/2\nm{x}^2$, it amounts to the accelerated primal-dual flow proposed in \cite{luo_acc_primal-dual_2021}. For well-posedness and exponential decay estimate of \cref{eq:apd-sys-app} with smooth objective $f$ and general prox-function $\phi$, we refer to \cref{app:abpd}. 
\subsection{A universal estimate}
Let $\{(x_k,v_k,\lambda_k,\gamma_k,\beta_k)\}_{k\in\mathbb N}$ be the sequence generated from \cref{algo:UAPD}. We introduce the discrete Lyapunov function 
\begin{equation}\label{eq:Ek}
	\mathcal E_k: =  \mathcal L\left(x_{k},\lambda^*\right)
	-\mathcal L\left(x^*,\lambda_k\right)+ \gamma_kD_\phi(x^*,v_k)+\frac{\beta_k}{2}\nm{\lambda_k-\lambda^*}^2.
\end{equation}
A one-step estimate is presented below.
\begin{lem}
	\label{lem:diff-Lk}
	Under \cref{assum:h-g}, we have 
	\begin{equation}\label{eq:diff-Lk}
		\mathcal E_{k+1}-		\mathcal E_{k}\leq -\alpha_k		\mathcal E_{k+1}+\frac{\delta_{k+1}}{2}(1+\alpha_k)\quad\forall\,k\in\mathbb N.
	\end{equation}
\end{lem}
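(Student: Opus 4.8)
The plan is to reduce \eqref{eq:diff-Lk} to the equivalent one-step inequality
\[
(1+\alpha_k)\,\mathcal E_{k+1}\ \le\ \mathcal E_k+\frac{1+\alpha_k}{2}\,\delta_{k+1},
\]
which is obtained by moving the term $\alpha_k\mathcal E_{k+1}$ to the left, since $\mathcal E_{k+1}+\alpha_k\mathcal E_{k+1}=(1+\alpha_k)\mathcal E_{k+1}$. I would split $\mathcal E_k=P_k+\gamma_kD_\phi(x^*,v_k)+\tfrac{\beta_k}{2}\nm{\lambda_k-\lambda^*}^2$ with the Lagrangian gap $P_k:=\mathcal L(x_k,\lambda^*)-\mathcal L(x^*,\lambda_k)$, and work with the reformulation \eqref{eq:pc-imex}, whose first and third lines give the convex-combination identities $y_k-x_k=\tfrac{\alpha_k}{1+\alpha_k}(v_k-x_k)$, $x_{k+1}-x_k=\tfrac{\alpha_k}{1+\alpha_k}(v_{k+1}-x_{k+1})$, hence $x_{k+1}-y_k=\tfrac{\alpha_k}{1+\alpha_k}(v_{k+1}-v_k)$; these turn every squared-displacement term into a multiple of $\nm{v_{k+1}-v_k}^2$.

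For the primal part I would combine four ingredients: (i) the line-search bound \eqref{eq:xk1-yk} for $h(x_{k+1})$; (ii) the $\mu$-convexity of $h$ (\cref{assum:h-g}) and convexity of $g$, applied at $x_k$ and at $x^*$ against the linearization points $y_k$ and $v_{k+1}$; (iii) the optimality condition of the $v$-subproblem, i.e.\ \eqref{eq:pc-imex-v}, which furnishes $\xi_{k+1}\in\partial g(v_{k+1})$, $\zeta_{k+1}\in N_Q(v_{k+1})$ with $\tfrac{\gamma_k}{\alpha_k}(\nabla\phi(v_{k+1})-\nabla\phi(v_k))=\mu(\nabla\phi(y_k)-\nabla\phi(v_{k+1}))-\nabla h(y_k)-\xi_{k+1}-\zeta_{k+1}-A^\top\whk$, paired with $v_{k+1}-x^*$; and (iv) the three-term identity \eqref{eq:3-term-id} to rewrite all Bregman inner products, together with strong convexity \eqref{eq:D-phi} to extract $-\tfrac{\gamma_k}{2}\nm{v_{k+1}-v_k}^2$ from $\gamma_kD_\phi(v_k,v_{k+1})$. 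Using the $\gamma$-update $(1+\alpha_k)\gamma_{k+1}=\gamma_k+\mu\alpha_k$ from \eqref{eq:gama-betak} to absorb the $\mu$-terms, this should yield that $(1+\alpha_k)[\mathcal L(x_{k+1},\lambda^*)-\mathcal L(x^*,\lambda^*)]$ is bounded by $[\mathcal L(x_k,\lambda^*)-\mathcal L(x^*,\lambda^*)]+\gamma_kD_\phi(x^*,v_k)-(1+\alpha_k)\gamma_{k+1}D_\phi(x^*,v_{k+1})+\tfrac{1+\alpha_k}{2}\delta_{k+1}$, plus the quadratic remainder $\big(\tfrac{M_{k+1}\alpha_k^2}{2(1+\alpha_k)}-\tfrac{\gamma_k}{2}\big)\nm{v_{k+1}-v_k}^2$ and the $A$-coupling residuals $\dual{A^\top(\whk-\lambda^*),v_{k+1}-x^*}$ and $\dual{\lambda^*-\lambda_k,Ax_{k+1}-b}$ coming from the linear part of $\mathcal L$.

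For the dual part I would express $\whk$ and $\lambda_{k+1}$ through \eqref{eq:pc-imex-l}, use $Ax^*=b$, and apply the quadratic three-term identity \eqref{eq:3-term-id-quad} to $\nm{\lambda_{k+1}-\lambda^*}^2$, which produces $\tfrac{\beta_k}{2}\nm{\lambda_k-\lambda^*}^2-(1+\alpha_k)\tfrac{\beta_{k+1}}{2}\nm{\lambda_{k+1}-\lambda^*}^2-\tfrac{\beta_k}{2}\nm{\lambda_{k+1}-\lambda_k}^2$ after invoking $(1+\alpha_k)\beta_{k+1}=\beta_k$; the leftover cross term between $v_{k+1}-v_k$ and the dual variables is controlled by Young's inequality against the slack $-\tfrac{\beta_k}{2}\nm{\lambda_{k+1}-\lambda_k}^2$, contributing $+\tfrac{\nm{A}^2\alpha_k^2}{2\beta_k}\nm{v_{k+1}-v_k}^2$. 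Adding the primal and dual estimates, the $A$-coupling residuals cancel against the corresponding linear terms in $P_{k+1}-P_k$, and all $\nm{v_{k+1}-v_k}^2$ terms collect into a coefficient $\le\tfrac{\alpha_k^2}{2\beta_k}(\beta_kM_{k+1}+\nm{A}^2)-\tfrac{\gamma_k}{2}$, which vanishes by the step-size identity \eqref{eq:ak}. What survives is exactly $(1+\alpha_k)\mathcal E_{k+1}\le\mathcal E_k+\tfrac{1+\alpha_k}{2}\delta_{k+1}$.

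The main obstacle is the coupling bookkeeping: one must check that, after the two three-term identities and Young's inequality, the $A$-dependent cross terms recombine so that the only surviving quadratic is $\nm{v_{k+1}-v_k}^2$ with exactly the coefficient annihilated by \eqref{eq:ak}, that no stray $\dual{\cdot,\lambda^*}$ or $\dual{\cdot,\lambda_k}$ terms are left behind when matching $P_{k+1}$ against $P_k$, and that the factor $1+\alpha_k$ is carried consistently through the convex-combination steps so that the line-search error emerges as $\tfrac{1+\alpha_k}{2}\delta_{k+1}$ and the $\gamma,\beta$ updates align with the $-\alpha_k\mathcal E_{k+1}$ term. The delicate tuning is entirely encoded in \eqref{eq:ak}, \eqref{eq:gama-betak}, and the choice $\widetilde\delta_k=\widetilde\beta_k/(k+1)$ from \cref{algo:UAPD-inner}.
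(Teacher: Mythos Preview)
Your proposal is correct and follows essentially the same route as the paper: the decomposition of $\mathcal E_k$ into the Lagrangian gap, the Bregman term and the dual quadratic, the use of the optimality condition \eqref{eq:pc-imex-v} paired with $v_{k+1}-x^*$ together with the three-term identity \eqref{eq:3-term-id}, the convex-combination relations from \eqref{eq:pc-imex-y}/\eqref{eq:pc-imex-x} to convert $x_{k+1}-y_k$ into $\tfrac{\alpha_k}{1+\alpha_k}(v_{k+1}-v_k)$, the line-search inequality \eqref{eq:xk1-yk}, and finally the step-size relation \eqref{eq:ak} to kill the residual quadratic in $\nm{v_{k+1}-v_k}^2$. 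The only minor deviation is in the dual bookkeeping: the paper handles the coupling exactly via the identity $\lambda_{k+1}-\whk=\tfrac{\alpha_k}{\beta_k}A(v_{k+1}-v_k)$, so that $\tfrac{\beta_k}{2}\nm{\lambda_{k+1}-\whk}^2$ directly produces the $\nm{A}^2$-contribution (and a nonpositive term $-\tfrac{\beta_k}{2}\nm{\lambda_k-\whk}^2$ is simply dropped), whereas you reach the same bound through Young's inequality against $-\tfrac{\beta_k}{2}\nm{\lambda_{k+1}-\lambda_k}^2$; both yield identical estimates.
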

\begin{proof}
	See \cref{sec:pf-lem-diff-Lk}.
\end{proof}

Using this lemma, we obtain the following theorem, which says the final convergence rate is given by the sharp decay estimate of the sequence $\{\beta_k\}_{k\in\mathbb N}$; see \cref{lem:est-tk}.
\begin{thm}\label{thm:conv}
	Under \cref{assum:h-g}, we have $\{x_k,v_k\}_{k\in\mathbb N}\subset Q$ and 
	\begin{align}
		\label{eq:est-Axk-b}
		\nm{Ax_k-b}\leq{}& \beta_k\mathcal T_{0,k},\\
		\label{eq:est-fxk-fx}	
		\snm{f(x_k)-f(x^*)}\leq {}&\beta_k\mathcal W_{0,k},	\\
		\label{eq:est-Lk}	
		\mathcal L\left(x_{k},\lambda^*\right)
		-\mathcal L\left(x^*,\lambda_k\right)
		\leq {}&	\beta_k\mathcal R_{0,k},
	\end{align}
	for all $k\in \mathbb N$, where $\mathcal R_{0,k}:=\mathcal E_0+\ln (k+1),\,\mathcal T_{0,k}:=\nm{Ax_0-b}+2\sqrt{2\mathcal R_{0,k}}$ and $\mathcal W_{0,k}:=\mathcal R_{0,k}+\nm{\lambda^*}\mathcal T_{0,k}$. Moreover, if $\mu>0$, then
	\begin{equation}\label{eq:xk-vk}
		\gamma_{\min}\nm{v_{k}-x^*}^2 + \mu\nm{x_k-x^*}^2\leq 2\beta_k\mathcal R_{0,k},
	\end{equation}
	where $\gamma_{\min} := \min\{\gamma_0,\mu\}$.
\end{thm}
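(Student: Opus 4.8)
\emph{Overview.} The proof is a weighted telescoping of the one‑step estimate \cref{lem:diff-Lk} followed by a term‑by‑term reading of $\mathcal E_k$; the feasibility and objective bounds then come from a second (exact) telescoping of the primal–dual updates.

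\emph{Step 1: a weighted telescoping giving $\mathcal E_k\le\beta_k\mathcal R_{0,k}$.} Rewrite \cref{eq:diff-Lk} as $(1+\alpha_k)\mathcal E_{k+1}\le \mathcal E_k+\tfrac{\delta_{k+1}}{2}(1+\alpha_k)$. From \cref{eq:gama-betak} we have $\beta_{k+1}(1+\alpha_k)=\beta_k$, and from \cref{eq:deltak-ik} together with $\alpha_k=\alpha_{k,i_k}$ we have $\delta_{k+1}=\beta_{k+1}/(k+1)$. Dividing the displayed inequality by $\beta_k$ therefore yields $\mathcal E_{k+1}/\beta_{k+1}\le \mathcal E_k/\beta_k+\tfrac1{2(k+1)}$. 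Summing over $j=0,\dots,k-1$ and using $\beta_0=1$ gives $\mathcal E_k\le\beta_k\big(\mathcal E_0+\tfrac12\sum_{j=1}^{k}1/j\big)$, and the elementary bound $\sum_{j=1}^{k}\tfrac1{2j}\le\ln(k+1)$ produces $\mathcal E_k\le\beta_k\mathcal R_{0,k}$.

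\emph{Step 2: nonnegativity of the summands, and feasibility.} By the saddle‑point inequality $\mathcal L(x^*,\lambda_k)\le\mathcal L(x^*,\lambda^*)\le\mathcal L(x_k,\lambda^*)$, so the Lagrangian gap in $\mathcal E_k$ is nonnegative; the Bregman term is nonnegative by \cref{eq:D-phi} and the quadratic term trivially so. Hence each summand of $\mathcal E_k$ is $\le\beta_k\mathcal R_{0,k}$: this is exactly \cref{eq:est-Lk}, and it also gives $\nm{\lambda_k-\lambda^*}\le\sqrt{2\mathcal R_{0,k}}$ for all $k$, while $\mathcal R_{0,k}\ge\mathcal E_0\ge\tfrac12\nm{\lambda_0-\lambda^*}^2$ gives $\nm{\lambda_0-\lambda^*}\le\sqrt{2\mathcal R_{0,k}}$. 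For feasibility, combine the updates $x_{k+1}=(x_k+\alpha_kv_{k+1})/(1+\alpha_k)$, $\lambda_{k+1}=\lambda_k+\tfrac{\alpha_k}{\beta_k}(Av_{k+1}-b)$ and $\beta_{k+1}=\beta_k/(1+\alpha_k)$: a short computation gives the exact identity $\tfrac{Ax_{k+1}-b}{\beta_{k+1}}=\tfrac{Ax_k-b}{\beta_k}+(\lambda_{k+1}-\lambda_k)$, hence $\tfrac{Ax_k-b}{\beta_k}=(Ax_0-b)+(\lambda_k-\lambda_0)$; writing $\lambda_k-\lambda_0=(\lambda_k-\lambda^*)+(\lambda^*-\lambda_0)$ and using the triangle inequality with the two bounds just obtained yields \cref{eq:est-Axk-b}.

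\emph{Step 3: objective gap and the strongly convex case.} Since $x_k,x^*\in Q$ and $Ax^*=b$, expanding the Lagrangian gives $\mathcal L(x_k,\lambda^*)-\mathcal L(x^*,\lambda_k)=f(x_k)-f(x^*)+\dual{\lambda^*,Ax_k-b}$. The upper bound in \cref{eq:est-fxk-fx} follows from \cref{eq:est-Lk,eq:est-Axk-b}; the lower bound follows from $\mathcal L(x^*,\lambda^*)\le\mathcal L(x_k,\lambda^*)$, i.e. $f(x^*)-f(x_k)\le\dual{\lambda^*,Ax_k-b}$. When $\mu>0$, combine the $\mu$‑convexity of $h$ and convexity of $g$ with the optimality relation $0\in\partial f(x^*)+N_Q(x^*)+A^\top\lambda^*$ to obtain $f(x_k)-f(x^*)+\dual{\lambda^*,Ax_k-b}\ge\mu D_\phi(x_k,x^*)\ge\tfrac\mu2\nm{x_k-x^*}^2$; hence $\mathcal E_k\ge\tfrac\mu2\nm{x_k-x^*}^2+\tfrac{\gamma_k}{2}\nm{v_k-x^*}^2$. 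Since $\gamma_{k+1}=(\gamma_k+\mu\alpha_k)/(1+\alpha_k)$ is a convex combination of $\gamma_k$ and $\mu$, induction gives $\gamma_k\ge\gamma_{\min}$, and combining with Step 1 yields \cref{eq:xk-vk}. The inclusion $\{x_k,v_k\}\subset Q$ was already observed after \cref{eq:apd-sys-app}.

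\emph{Main obstacle.} Everything except one point is telescoping and triangle‑inequality bookkeeping; the only genuinely nonroutine step is extracting the term $\tfrac\mu2\nm{x_k-x^*}^2$ hidden inside the Lagrangian gap in Step 3, which requires carefully summing the subgradient inequalities of $h$, $g$, $\iota_Q$ at $x^*$ and substituting the first‑order optimality relation; a secondary (minor) care point is pinning down the logarithmic constant $\sum_{j=1}^k \tfrac1{2j}\le\ln(k+1)$ in Step 1.
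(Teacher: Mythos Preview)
Your proof is correct and follows essentially the same approach as the paper: telescope \cref{lem:diff-Lk} via $\beta_{k+1}(1+\alpha_k)=\beta_k$ and $\delta_{k+1}=\beta_{k+1}/(k+1)$ to obtain $\mathcal E_k\le\beta_k\mathcal R_{0,k}$, then read off the individual bounds. Where the paper defers \cref{eq:est-Axk-b,eq:est-fxk-fx} to \cite[Theorem 3.1]{luo_acc_primal-dual_2021}, you supply the explicit telescoping identity $\tfrac{Ax_k-b}{\beta_k}=(Ax_0-b)+(\lambda_k-\lambda_0)$ and the $\mu$-convexity argument for \cref{eq:xk-vk}, which is exactly what that reference does.
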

\begin{proof}
	From \cref{eq:gama-betak} and the contraction estimate \cref{eq:diff-Lk} follows immediately that 
	\[
	\mathcal E_{k+1}\leq \frac{1}{1+\alpha_k}\mathcal E_k+\frac{\delta_{k+1}}{2}\quad\Longrightarrow\quad 	\mathcal E_k\leq 	\beta_k\mathcal E_0+\frac{\beta_k}{2}\sum_{i=0}^{k-1}\frac{\delta_{i+1}}{\beta_{i+1}}.
	\]
	By \cref{eq:deltak-ik}, we have $\delta_{k+1} = \beta_{k+1}/(k+1)$, which further implies 
	\begin{equation}\label{eq:est-Ek}
		\mathcal E_k\leq \beta_k\mathcal E_0+\frac{\beta_k}{2}\sum_{i=0}^{k-1}\frac{1}{i+1}
		\leq 	\beta_k\left[\mathcal E_0+\ln (k+1)\right].
	\end{equation}
	This proves \cref{eq:est-Lk,eq:xk-vk}. Following the proof of \cite[Theorem 3.1]{luo_acc_primal-dual_2021}, it is not hard to establish \cref{eq:est-Axk-b,eq:est-fxk-fx}.	Hence, we conclude the proof of this theorem.
\end{proof}
\begin{rem}
	Note that the choice \cref{eq:deltak-ik} can be replaced with 
	\[
	\delta_{k,i_k} = \frac{\delta}{k+1}\cdot\frac{\beta_{k}}{1+\alpha_{k,i_k}},\quad\delta>0.
	\]
	Then the item $\mathcal R_{0,k}$ in \cref{thm:conv} becomes $	\mathcal R_{0,k}=\mathcal E_0+\delta\ln(k+1)$ and $\delta = 1/\ln(K+1)$ cancels the logarithm factor, where $K\in\mathbb N$ is the number of iterations chosen in advance.
\end{rem}

It remains to establish the decay estimate of $\{\beta_k\}_{k\in\mathbb N}$. 	From \cref{eq:ak,eq:gama-betak}, we obtain
\begin{equation}
	\label{eq:diff-bk-}
	\beta_{k+1}-\beta_{k} = -\frac{\sqrt{\gamma_{k}\beta_k}\beta_{k+1}}{\sqrt{\beta_kM_{k+1}
			+\nm{A}^2}}.
\end{equation}
A careful investigation into this difference equation gives the desired result.
\begin{lem}
	\label{lem:est-tk}
	Assume that $M_{0}\leq [M_\nu(h)]^{\frac{2}{1+\nu}}$ and $\max\{\gamma_0,\mu\}\leq \nm{A}^2$.
	If $\mu=0$, then 
	\begin{equation}\label{eq:bk-mu-0}
		\beta_k\leq 
		C_{\nu}\left(
		\frac{\nm{A}}{\sqrt{\gamma_0} k}
		+\frac{M_\nu(h)}{\gamma_{0}^{\frac{1+\nu}{2}}k^{\frac{1+3\nu}{2}}}
		\right)\quad\forall\,k\geq 1,
	\end{equation}
	and if $\mu>0$, then for all $k\geq 1$, we have
	\begin{equation}\label{eq:bk-mu>0}
		\beta_k\leq 
		C_\nu\left\{
		\begin{aligned}
			{}&\frac{\nm{A}^2}{\gamma_{\min} k^2}
			+\frac{[M_\nu(h)]^{\frac{2}{1-\nu}}}{\gamma_{\min}^{\frac{1+\nu}{1-\nu}}k^{\frac{1+3\nu}{1-\nu}}}&&\text{ if }\nu<1,\\	
			{}&	 \frac{\nm{A}^2}{\gamma_{\min} k^2}+\exp\left(-\frac{k}{8\sqrt{3}}\sqrt{\frac{\gamma_{\min}}{L_h}}\right)&&\text{ if }\nu=1,
		\end{aligned}
		\right.
	\end{equation}
	where $\gamma_{\min} = \min\{\gamma_0,\mu\}$ and $C_\nu>0$ depends only on $\nu$.
\end{lem}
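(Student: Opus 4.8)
The plan is to analyze the scalar difference equation \cref{eq:diff-bk-} directly, treating it as a discrete analogue of the ODE $\beta' = -\sqrt{\gamma\beta}\,\beta/\sqrt{\beta M + \|A\|^2}$. First I would record the coupled recursions: from \cref{eq:gama-betak} we have $\gamma_{k+1} = (\gamma_k + \mu\alpha_k)/(1+\alpha_k)$ and $\beta_{k+1} = \beta_k/(1+\alpha_k)$, so $\gamma_k \ge \gamma_{\min} := \min\{\gamma_0,\mu\}$ for all $k$ (an easy induction, since $\gamma_{k+1}$ is a convex combination of $\gamma_k$ and $\mu$), and $\{\beta_k\}$ is nonincreasing. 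The key quantitative input is the bound on $M_{k+1}$ from \cref{lem:Mk}: under the hypothesis $M_0 \le [M_\nu(h)]^{2/(1+\nu)}$ we get $M_{k+1} \le 2\sqrt 2\, M(\nu,\delta_{k+1}) = 2\sqrt2\,\delta_{k+1}^{(\nu-1)/(\nu+1)}[M_\nu(h)]^{2/(\nu+1)}$, and since $\delta_{k+1} = \beta_{k+1}/(k+1)$ this expresses $M_{k+1}$ in terms of $\beta_{k+1}$ and $k$. Substituting into \cref{eq:ak} gives a closed (if messy) recursion for $\beta_k$ alone, which is what must be estimated.

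Next I would split off the two additive contributions in the bound. Because $\sqrt{\beta_k M_{k+1} + \|A\|^2} \le \sqrt{\beta_k M_{k+1}} + \|A\|$, the decrement $\beta_k - \beta_{k+1} = \sqrt{\gamma_k\beta_k}\,\beta_{k+1}/\sqrt{\beta_k M_{k+1}+\|A\|^2}$ is bounded below by a sum of two terms, one governed by $\|A\|$ (the "primal-dual" part) and one by $M_\nu(h)$ (the "smoothness" part). Dividing through by an appropriate power of $\beta_{k+1}$ and summing telescopically — i.e. comparing $\beta_{k+1}^{-r} - \beta_k^{-r}$ from below using the mean value theorem / convexity of $t\mapsto t^{-r}$ — converts each part into a statement of the form $\beta_k^{-r} \gtrsim c\,k$, hence $\beta_k \lesssim (ck)^{-1/r}$ with the correct exponent. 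For the $\|A\|$-driven part one takes $r=1$ when $\mu=0$ (yielding $\beta_k \lesssim \|A\|/(\sqrt{\gamma_0}k)$) and $r=1/2$ when $\mu>0$ using $\gamma_k\ge\gamma_{\min}$ (yielding $\beta_k\lesssim \|A\|^2/(\gamma_{\min}k^2)$); for the $M_\nu(h)$-driven part the relevant exponent, after accounting for the $\beta_k M_{k+1} \asymp \beta_k^{(2\nu)/(1+\nu)} k^{(1-\nu)/(1+\nu)}[M_\nu(h)]^{2/(1+\nu)}$ scaling, works out to the stated powers $k^{-(1+3\nu)/2}$ (case $\mu=0$) and $k^{-(1+3\nu)/(1-\nu)}$ (case $\mu>0$, $\nu<1$). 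Since two lower bounds on a telescoped sum can be added, the two-term upper bounds in \cref{eq:bk-mu-0} and \cref{eq:bk-mu>0} follow.

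The borderline case $\nu=1$ is handled separately: then $M_{k+1} \le 2\sqrt2\,[M_\nu(h)]^{2/(1+\nu)} = 2\sqrt 2\,L_h$ is simply bounded by a constant (the exponent of $\delta_{k+1}$ vanishes), so \cref{eq:diff-bk-} becomes $\beta_{k+1} \le \beta_k/(1+\alpha_k)$ with $\alpha_k = \sqrt{\beta_k\gamma_k}/\sqrt{\beta_k L_h' + \|A\|^2}$ for a fixed constant $L_h'$; once $\beta_k$ has dropped below $\|A\|^2/L_h'$ (which, by the first term, happens after $O(\|A\|^2/(\gamma_{\min}L_h'))$ steps contributing to the $\|A\|^2/(\gamma_{\min}k^2)$ part) one has $\alpha_k \gtrsim \sqrt{\gamma_{\min}/L_h}$, giving geometric decay $\beta_{k+1}\le \beta_k/(1+c\sqrt{\gamma_{\min}/L_h})$ and hence the exponential term $\exp(-ck\sqrt{\gamma_{\min}/L_h})$.

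The main obstacle I anticipate is the bookkeeping in the mixed regime: $M_{k+1}$ depends on $\beta_{k+1}$ itself, so the recursion is implicit, and one must be careful that substituting the $\lesssim$-bound for $M_{k+1}$ is legitimate (it is, since $M_{k+1}$ appears in a denominator with a $+$ sign, so an upper bound on $M_{k+1}$ gives a lower bound on the decrement). Beyond that, pinning down the exact exponents and verifying that $t\mapsto t^{-r}$ with the fractional $r = (1-\nu)/(1+3\nu)$ (or its $\mu>0$ analogue) is convex on the relevant range — so that the telescoping inequality goes the right way — and tracking the $\nu$-dependent constant $C_\nu$ through all the $2\sqrt2$ and $(1+\nu)$ factors, is the tedious but routine part. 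The hypothesis $\max\{\gamma_0,\mu\}\le\|A\|^2$ is used only to keep these constants clean (it guarantees $\alpha_k\le 1$ and that the two competing terms are correctly ordered at $k=1$).
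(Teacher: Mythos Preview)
Your proposal is correct in outline and shares the same skeleton as the paper's proof: bound $M_{k+1}$ via \cref{lem:Mk} (using the hypothesis $M_0\le[M_\nu(h)]^{2/(1+\nu)}$ to kill the $\max$), use the subadditivity $\sqrt{a+b}\le\sqrt a+\sqrt b$ to separate the $\nm{A}$- and $M_\nu(h)$-driven contributions, and then extract a power-law (or exponential) decay from each. The genuine methodological difference is that the paper does \emph{not} work discretely. Instead it defines the piecewise-linear interpolant $y(t)$ of $\{\beta_k\}$, checks that $y$ satisfies a differential inequality of the form
\[
y'\le -\frac{\sigma\,y^\theta}{\sqrt{\varphi\,y^{2\eta}+R^2}},
\]
and proves a standalone ODE lemma (\cref{lem:est-y}) by integrating $(\sqrt{\varphi}\,y^{\eta-\theta}+R\,y^{-\theta})y'\le-\sigma$; the two-term bound $y\le Y_1+Y_2$ then comes from a monotonicity trick (define $Y_i$ so that each summand equals $\tfrac12\Sigma$, and use that the combined potential is decreasing in $y$). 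Your telescoping route is the discrete analogue of that same integration and is more elementary, but two points in your write-up are loose: (i) the decrement is not literally ``bounded below by a sum of two terms'' --- the sum sits in the denominator, and you must multiply through before you get additive contributions that telescope; (ii) the step ``two lower bounds on a telescoped sum can be added'' hides exactly the monotonicity argument above, and also requires care because the coefficient $\varphi(k)\asymp (k+1)^{(1-\nu)/(1+\nu)}$ is \emph{time-varying}. The paper's continuous route absorbs this via $\varphi'\ge0$ and $\ln y\le 0$ (resp.\ $y^{\eta+1-\theta}\ge 1$), which has a discrete counterpart but is not quite as automatic as you suggest. Both approaches land on the same bounds; the paper's buys a reusable lemma at the cost of the interpolation detour, yours stays closer to the recursion but needs tighter bookkeeping on the $k$-dependent factor.
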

\begin{proof}
	By \cref{lem:Mk}, we have
	\[
	M_k\leq \max\left\{ 2\sqrt{2}M(\nu,\delta_{k}),\,M_{0}\right\}\quad\forall\,k\geq 1.
	\]
	In view of $\delta_1 = \beta_1 = 1/(1+\alpha_0)\leq 1$, it follows immediately that 
	\[
	M(\nu,\delta_1)\geq M(\nu,1)= [M_\nu(h)]^{\frac{2}{1+\nu}}\geq M_{0}.
	\]
	Since $\delta_k = \beta_k/k$ and $\beta_k$ is decreasing,it holds that $M(\nu,\delta_{1})\leq M(\nu,\delta_{k})$ and $M_k\leq  2\sqrt{2}M(\nu,\delta_{k})$. Plugging this into \cref{eq:diff-bk-} gives
	\begin{equation}
		\label{eq:diff-bk}
		\beta_{k+1}-\beta_{k} \leq  -\frac{\sqrt{\gamma_{k}\beta_k}\beta_{k+1}}{\sqrt{2\sqrt{2}\beta_kM(\nu,\delta_{k+1})
				+\nm{A}^2}}.
	\end{equation}
	Based on this difference inequality, we obtain \cref{eq:bk-mu-0,eq:bk-mu>0}. Missing proofs are provided in \cref{sec:pf-lem-est-tk}.
\end{proof}

According to the {\it universal mixed-type estimate} established in \cref{lem:est-tk}, our \cref{algo:UAPD} achieves the optimal complexity bound for both the unconstrained case $A = O$ and affinely constrained case $A\neq O$,  with H\"{o}lderian smoothness exponent $\nu\in[0,1]$. Detailed comparisons with existing results are summarized in order.
\begin{rem}\label{rem:complexity}
	Consider first the unconstrained case: $A = O$.
	\begin{itemize}
		\item The Lipschitzian case $\nu = 1$:
		\[
		\min\left\{\sqrt{\frac{L_h}{\epsilon}},\,\sqrt{\frac{L_h}{\mu}}\cdot\snm{\ln\epsilon}\right\}.
		\]
		This is the well-known optimal complexity bound (cf.\cite{nesterov_introductory_2004,nesterov_lectures_2018}) of first-order methods for smooth convex functions with Lipschitz continuous gradients; see \cite{chen_first_2019,chen_unified_2021,luo_accelerated_2021,luo_differential_2019,nesterov_gradient_2013}.
		\item The H\"{o}lderian case $0\leq \nu < 1$:
		\begin{equation}\label{eq:Un-nu}
			\min\left\{\left(\frac{M_\nu(h)}{\epsilon}\right)^\frac{2}{1+3\nu},\quad\left( \frac{M_\nu(h)}{\mu}\right)^{\frac{2}{1+3\nu}}\cdot \left(\frac{\mu}{\epsilon}\right)^{\frac{1-\nu}{1+3\nu}}\right\}.
		\end{equation}
		This matches the lower bound in \cite{nbmirovskii_optimal_1985,nemirovsky_problem_1983}. The convex case $\mu = 0$ has been obtained by the methods in \cite{kamzolov_universal_2019,nbmirovskii_optimal_1985,nesterov_universal_2015}, and the restarted schemes in \cite{kamzolov_universal_2019,roulet_sharpness_2017} attained the complexity bound for $\mu>0$. Besides, Guminov et al. \cite{guminov_primal-dual_2018} obtained \cref{eq:Un-nu} for nonconvex problems, with an additional 1D line search.
	\end{itemize}
\end{rem}
\begin{rem}
	Then let us focus on the affine constraint case: $A \neq O$.
	\begin{itemize}		
		\item The Lipschitzian case $\nu = 1$:
		\begin{equation}\label{eq:A-mu-0}
			\min\left\{\frac{\nm{A}}{\epsilon}+\sqrt{\frac{L_h}{\epsilon}},\quad
			\frac{\nm{A}}{\sqrt{\mu \epsilon}}+\sqrt{\frac{L_h}{\mu}}\cdot\snm{\ln\epsilon} \right\}.
		\end{equation}
		This coincides with the lower complexity bound in \cite{ouyang_lower_2021}.
		The methods in \cite{chen_optimal_2014,nesterov_smooth_2005,xu_iteration_2021} achieved the bound for convex case $\mu=0$, and the strongly convex case $\mu>0$ can be found in \cite{xu_iteration_2021}. 
		\item The H\"{o}lderian case $0\leq \nu < 1$:
		\[
		\min\left\{\frac{\nm{A}}{\epsilon}+\left(\frac{M_\nu(h)}{\epsilon}\right)^\frac{2}{1+3\nu},\quad
		\frac{\nm{A}}{\sqrt{\mu \epsilon}}+	\left( \frac{M_\nu(h)}{\mu}\right)^{\frac{2}{1+3\nu}}\cdot \left(\frac{\mu}{\epsilon}\right)^{\frac{1-\nu}{1+3\nu}} \right\}.
		\]
		Similarly with \cref{eq:A-mu-0}, this universal mixed-type estimate has optimal dependence on $\nm{A}$ (corresponding to the affine constraint), and the remainder agrees with \cref{eq:Un-nu}, which is optimal with respect to $\mu$ and $M_\nu(h)$ (related to the objective $f$).  
	\end{itemize}		
\end{rem}

\section{Proof of \cref{lem:diff-Lk}}
\label{sec:pf-lem-diff-Lk}
Let us start from the difference 
$	\mathcal E_{k+1}-\mathcal E_{k} = \mathbb I_1+\mathbb I_2+\mathbb I_3$, where
\[
\left\{
\begin{aligned}
	\mathbb I_1	:={}& \mathcal L\left(x_{k+1},\lambda^*\right)
	-\mathcal L\left(x_{k},\lambda^*\right),\\
	\mathbb I_2:=	{}&\gamma_{k+1}
	D_\phi(x^*,v_{k+1})- 
	\gamma_{k}
	D_\phi(x^*,v_k),\\
	\mathbb I_3:={}&\frac{\beta_{k+1}}{2}
	\nm{\lambda_{k+1}-\lambda^*}^2 - 
	\frac{\beta_k}{2}
	\nm{\lambda_{k}-\lambda^*}^2.
\end{aligned}
\right.
\]
Notice that $x_k,\,x_{k+1}\in Q$ and the first term is easy to handle:
\begin{equation}\label{eq:I1}
	\mathbb I_1 = f(x_{k+1})-f(x_k)+\langle \lambda^*,A(x_{k+1}-x_k) \rangle.
\end{equation}
We derive the estimate of $\mathbb I_2$ in \cref{sec:I2} and finish the proof of \cref{eq:diff-Lk} in \cref{sec:I3-}.
\subsection{Estimate of $\mathbb I_2$}
\label{sec:I2}
Invoking the three-term identity \cref{eq:3-term-id} and the difference equation of $\{\gamma_k\}_{k\in \mathbb N}$ in \cref{eq:gama-betak}, we split the second term $\mathbb I_2$ as follows
\[
\begin{aligned}
	\mathbb I_2={}&(\gamma_{k+1}-\gamma_{k})D_\phi(x^*,v_{k+1})+\gamma_{k}\left[	D_\phi(x^*,v_{k+1})-	D_\phi(x^*,v_k) \right]\\
	= {}&\alpha_k(\mu-\gamma_{k+1})D_\phi(x^*,v_{k+1})
	-\gamma_{k}D_\phi(v_{k+1},v_k)\\
	{}&\quad
	+\gamma_{k}\dual{\nabla\phi(v_{k+1})-\nabla\phi(v_k),v_{k+1}-x^*}.
\end{aligned}
\]
Let us prove 
\begin{equation}\label{eq:vk1-vk}
	\begin{aligned}
		{}&\mu\alpha_k D_\phi(x^*,v_{k+1})
		+\gamma_{k}\dual{\nabla\phi(v_{k+1})-\nabla\phi(v_k),v_{k+1}-x^*}
		\\
		\leq{}		& h(x_k)-h(y_k)
		- \alpha_k\left[h(y_k)-h(x^*)+\big\langle \whk, Av_{k+1} -b\big\rangle\right]\\
		{}&\quad	-\alpha_k\left[g(v_{k+1})-g(x^*)+\dual{\nabla h(y_k),v_{k+1} - v_k}\right],
	\end{aligned}
\end{equation}
which leads to the desired estimate of $\mathbb I_2$:
\begin{equation}\label{eq:I3}
	\begin{aligned}
		\mathbb I_2\leq&-\alpha_k \gamma_{k+1}D_\phi(x^*,v_{k+1})	-\gamma_{k}D_\phi(v_{k+1},v_k)-\alpha_k \big\langle \whk, Av_{k+1} -b\big\rangle\\
		&\quad	-\alpha_k\left[g(v_{k+1})-g(x^*)+h(y_k)-h(x^*)\right]\\
		&\quad\quad +h(x_k)-h(y_k)		-\alpha_k\dual{\nabla h(y_k),v_{k+1} - v_k}.
	\end{aligned}
\end{equation}

To do this, define $\zeta_{k+1}$ by that
\begin{equation}\label{eq:vk1-vk-}
	\begin{aligned}
		{}&\gamma_k\big[\nabla\phi(v_{k+1})-\nabla\phi(v_k)\big] \\
		={}& \mu\alpha_k\big[\nabla\phi(y_k)-\nabla\phi(v_{k+1})\big]
		-\alpha_k\big[\nabla h(y_k)+\zeta_{k+1}+A^\top\whk\big].
	\end{aligned}
\end{equation}
Observing \eqref{eq:pc-imex-v}, it follows that $\zeta_{k+1}\in\partial g(v_{k+1})+N_Q(v_{k+1})$ and 
\[
\begin{aligned}
	{}&	-\alpha_k\big\langle\zeta_{k+1}, v_{k+1} - x^{*}\big\rangle
	\leq-\alpha_k\left[g(v_{k+1})-g(x^*)\right].
\end{aligned}
\]
Thanks to \cref{eq:3-term-id}, we have the decomposition
\[
\begin{aligned}
	{}&\mu\alpha_k	\dual{\nabla\phi(y_k)-\nabla\phi(v_{k+1}), v_{k+1} - x^{*}}\\
	={}&\mu\alpha_k
	\big[D_\phi(x^*,y_k)
	-D_\phi(x^*,v_{k+1})
	-D_\phi(v_{k+1},y_k)\big],
\end{aligned}
\]
and invoking \eqref{eq:pc-imex-y} leads to
\begin{align*}
	&
	-\alpha_k \dual{ \nabla h(y_k), v_{k+1} - x^{*}} \\
	=&-\alpha_k\dual{\nabla h(y_k),v_{k+1} - v_k}
	- \dual{ \nabla h(y_k), y_{k} - x_{k}} 
	- \alpha_k\dual{  \nabla h(y_k), y_{k} - x^{*}}.
\end{align*}
Since $x_k,\,y_k\in Q$, by \cref{assum:h-g} we obtain
\begin{align*}
	{}&
	- \dual{ \nabla h(y_k), y_{k} - x_{k}} 
	- \alpha_k\dual{  \nabla h(y_k), y_{k} - x^{*}}\\
	\leq  {}&h(x_k)-h(y_k)
	- \alpha_k\left[h(y_k)-h(x^*)+\mu D_\phi(x^*,y_k)\right)].
\end{align*}
Hence, combining the above estimates with \cref{eq:vk1-vk-} proves \cref{eq:vk1-vk}.
\subsection{Proof of  \cref{eq:diff-Lk}}
\label{sec:I3-}
Similarly as before, by \cref{eq:3-term-id}, \eqref{eq:pc-imex-l} and \cref{eq:gama-betak}, the third term $\mathbb I_3$
is rearranged by that
\[
\begin{aligned}
	\mathbb I_3=	{}&	 -\frac{\alpha_k\beta_{k+1}}{2}\nm{\lambda_{k+1}-\lambda^*}^2
	-\frac{\beta_{k}}{2}\big\|\lambda_{k+1}-\lambda_k\big\|^2\\
	{}&\qquad		+\alpha_{k}\big\langle Av_{k+1}-b,\lambda_{k+1}-\lambda^*\big\rangle.
\end{aligned}
\]
To match the cross term $		-\alpha_k \big\langle \whk, Av_{k+1} -b\big\rangle$ in the estimate of $\mathbb I_2$ (cf.\cref{eq:I3}), we rewrite the last term as follows
\[
\begin{aligned}
	{}&
	\alpha_{k}\big\langle Av_{k+1}-b,\lambda_{k+1}-\lambda^*\big\rangle\\
	={}&\alpha_{k}\big\langle Av_{k+1}-b,\lambda_{k+1}-\whk\big\rangle
	+\alpha_{k}\big\langle Av_{k+1}-b,\whk-\lambda^*\big\rangle.
\end{aligned}
\]
In view of \cref{eq:3-term-id-quad} and \eqref{eq:pc-imex-l}, we get
\[
\begin{aligned}
	{}&\alpha_{k}\big\langle Av_{k+1}-b,\lambda_{k+1}-\whk\big\rangle=\beta_k\dual{\lambda_{k+1}-\lambda_{k},\lambda_{k+1}-\whk}\\
	={}&
	\frac{\beta_{k}}{2}\big\|\lambda_{k+1}-\lambda_k\|^2
	+	\frac{\beta_{k}}{2} \big\|\lambda_{k+1}-\whk\big\|^2
	-\frac{\beta_{k}}{2}\big\|\lambda_{k}-\whk\big\|^2,
\end{aligned}
\]
which gives
\[
\mathbb I_3\leq  -\frac{\alpha_k\beta_{k+1}}{2}\nm{\lambda_{k+1}-\lambda^*}^2
+\frac{\beta_{k}}{2}\big\|\lambda_{k+1}-\whk\big\|^2
+\alpha_{k}\big\langle Av_{k+1}-b,\whk-\lambda^*\big\rangle.
\]
Therefore, collecting this with \cref{eq:I1,eq:I3} yields
\[
\begin{aligned}
	\mathcal E_{k+1}-\mathcal E_{k}\leq  &
	-\alpha_k\mathcal E_{k+1}
	+\frac{\beta_{k}}{2}\big\|\lambda_{k+1}-\whk\big\|^2
	-\gamma_{k}D_\phi(v_{k+1},v_k)
	\\
	{}&		\quad+(1+\alpha_k)\left[h(x_{k+1})- h(y_{k})\right]	-\alpha_k\dual{\nabla h(y_k),v_{k+1} - v_k}\\
	{}&	\qquad			+(1+\alpha_k)g(x_{k+1})-g(x_{k})	-\alpha_kg(v_{k+1}).		
\end{aligned}
\]
From \eqref{eq:pc-imex-x}, we see that $x_{k+1}$ is a convex combination of $x_k$ and $v_{k+1}$, which implies
\[
(1+\alpha_k)g(x_{k+1})\leq g(x_{k})	+\alpha_kg(v_{k+1}).
\]
Thanks to \cref{eq:xk1-yk} and the relation $\alpha_k(v_{k+1}-v_k)=(1+\alpha_k)	(x_{k+1}-y_k) $ (cf.\eqref{eq:pc-imex-y} and \eqref{eq:pc-imex-x}), we obtain
\[
\begin{aligned}
	{}	&(1+\alpha_k)\left[h(x_{k+1})- h(y_{k})\right]	-\alpha_k\dual{\nabla h(y_k),v_{k+1} - v_k}\\
	\leq{}&\frac{\alpha_k^2M_{k+1}}{2+2\alpha_k}\nm{v_{k+1}-v_k}^2
	+\frac{\delta_{k+1}}{2}(1+\alpha_k).
\end{aligned}
\]
Consequently, applying \cref{eq:D-phi} leads to
\[
\begin{aligned}
	\mathcal E_{k+1}-\mathcal E_{k}\leq  &
	-\alpha_k\mathcal E_{k+1}+\frac{\delta_{k+1}}{2}(1+\alpha_k)
	+\frac{\beta_{k}}{2}\big\|\lambda_{k+1}-\whk\big\|^2\\
	&\quad
	+\frac{\alpha_k^2M_{k+1}-\gamma_{k}(1+\alpha_k)}{1+\alpha_k}D_\phi(v_{k+1},v_k).
\end{aligned}
\]
Recall that $\whk = \lambda_k+\alpha_k/\beta_k(Av_k-b)$, which together with \eqref{eq:pc-imex-l} gives $\lambda_{k+1}-	\whk =\alpha_k/\beta_k A(v_{k+1}-v_k)$ and 
\[
\begin{aligned}
	\mathcal E_{k+1}-\mathcal E_{k}\leq  {}&
	\!-\alpha_k\mathcal E_{k+1}+\frac{\delta_{k+1}}{2}(1+\alpha_k)\\
	{}&\quad	+\frac{1}{\beta_{k}}
	\left[\alpha_k^2(\beta_{k+1}M_{k+1}
	+\nm{A}^2)-\gamma_k\beta_k\right]
	D_\phi(v_{k+1},v_k).
\end{aligned}
\]
Since $\beta_{k+1}\leq \beta_{k}$ (cf.\cref{eq:gama-betak}), the desired estimate \cref{eq:diff-Lk} follows immediately from \cref{eq:ak}. This finishes the proof of \cref{lem:diff-Lk}.

\section{Proof of \cref{lem:est-tk}}
\label{sec:pf-lem-est-tk}
The key to complete the proof of \cref{lem:est-tk} is the difference inequality \cref{eq:diff-bk}.
In \cref{sec:est-yt}, we shall introduce an auxiliary differential inequality (cf.\cref{eq:yt}) that can be viewed as a continuous analogue  to \cref{eq:diff-bk}. Later in \cref{sec:mu-0,sec:mu>0}, we finish the proofs of \cref{eq:bk-mu-0,eq:bk-mu>0} by using the asymptotic estimate of \cref{eq:yt}.
\subsection{A differential inequality}
\label{sec:est-yt}
Let $\eta,\,R\geq 0$ and $\theta>1$ be real constants such that $\eta\leq \theta-1$.
Assume $y\in W^{1,\infty}(0,\infty)$ is positive and satisfies the differential inequality
\begin{equation}\label{eq:yt}
	y'(t) \leq  -\frac{\sigma(t) y^\theta(t)}{\sqrt{\varphi(t)y^{2\eta}(t) + R^2}},\quad y(0) = 1,
\end{equation}
where $\sigma\in L^1(0,\infty)$ is nonnegative, and $\varphi\in C^1[0,\infty)$ is positive and nondecreasing. Plugging the trivial estimate 
\[
\sqrt{\varphi(t)y^{2\eta}(t) + R^2}\leq \sqrt{\varphi(t)}y^{\eta}(t)+R
\]
into \cref{eq:yt} gives
\begin{equation}\label{eq:Ly'}
	\left(\frac{\sqrt{\varphi(t)}}{y^{\theta-\eta}(t)}+\frac{R}{y^{\theta}(t)}\right)y'(t)\leq -\sigma(t).
\end{equation}

The decay estimate of $y(t)$ is given below. Detailed proof can be found in \cref{sec:app-pf}.
\begin{lem}\label{lem:est-y}
	Assume $y\in W^{1,\infty}(0,\infty)$ is positive and satisfies \cref{eq:yt}. Then for all $t>0$, we have
	\[
	y(t)\leq C_{\theta,\eta}\left\{
	\begin{aligned}
		{}&\left(\frac{\sqrt{\varphi(t)}}{\Sigma(t)}\right)^{\frac{1}{\theta-\eta-1}}
		+\left(\frac{R}{\Sigma(t)}\right)^{\frac{1}{\theta-1}}
		&&\text{if }\eta<\theta-1,\\
		{}&\exp\left(-\frac{\Sigma(t)}{2\sqrt{\varphi(t)}}\right)+
		\left(\frac{R}{\Sigma(t)}\right)^{\frac{1}{\eta}}&&\text{if }
		\eta=\theta-1,
	\end{aligned}
	\right.
	\]
	where $\Sigma(t) :=\int_{0}^{t}\sigma(s)\dd s$ and $C_{\theta,\eta}>0$ depends only on $\theta$ and $\eta$.
\end{lem}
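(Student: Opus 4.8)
The plan is to integrate the already-available inequality \cref{eq:Ly'} over $[0,t]$ and then separate the two competing contributions by a crude dichotomy. Before doing so I would record the two elementary facts that make everything work: first, the right-hand side of \cref{eq:yt} is $\le 0$, so $y$ is nonincreasing and hence $0 < y(t) \le y(0) = 1$ for every $t \ge 0$; second, $y \in W^{1,\infty}(0,\infty)$ is absolutely continuous, so the fundamental theorem of calculus and the change of variables $u = y(s)$ used below are legitimate. (If $\Sigma(t) = 0$ the asserted bound is vacuous and $y(t) \le 1$ suffices, so I may assume $\Sigma(t) > 0$.)

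Integrating \cref{eq:Ly'} on $[0,t]$ and multiplying by $-1$ yields
\[
J_1 + J_2 \ \ge\ \Sigma(t), \qquad J_1 := \int_0^t \sqrt{\varphi(s)}\,\frac{-y'(s)}{y^{\theta-\eta}(s)}\dd s,\quad J_2 := \int_0^t R\,\frac{-y'(s)}{y^{\theta}(s)}\dd s,
\]
where $J_1, J_2 \ge 0$ since $-y' \ge 0$ and $y > 0$; hence $\max\{J_1, J_2\} \ge \Sigma(t)/2$, and I treat the two alternatives separately. In the alternative $J_1 \ge \Sigma(t)/2$, I bound $J_1$ from above using that $\varphi$ is nondecreasing (so $\sqrt{\varphi(s)} \le \sqrt{\varphi(t)}$ on $[0,t]$, and this is favorable because $-y'(s) \ge 0$) and then evaluate $\int_0^t (-y'/y^{\theta-\eta}) = \int_{y(t)}^{1} u^{-(\theta-\eta)}\dd u$ by substitution. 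For $\eta < \theta-1$ this integral equals $\big(y(t)^{-(\theta-\eta-1)}-1\big)/(\theta-\eta-1) \le y(t)^{-(\theta-\eta-1)}/(\theta-\eta-1)$, so the alternative forces $y(t) \le C_{\theta,\eta}\big(\sqrt{\varphi(t)}/\Sigma(t)\big)^{1/(\theta-\eta-1)}$; for $\eta = \theta-1$ it instead equals $\ln(1/y(t))$, forcing $y(t) \le \exp\!\big(-\Sigma(t)/(2\sqrt{\varphi(t)})\big)$. Symmetrically, in the alternative $J_2 \ge \Sigma(t)/2$, the substitution gives $\int_{y(t)}^{1} u^{-\theta}\dd u = \big(y(t)^{-(\theta-1)}-1\big)/(\theta-1)$, so $y(t) \le C_\theta\big(R/\Sigma(t)\big)^{1/(\theta-1)}$ when $\eta < \theta-1$, and $y(t) \le C_\eta\big(R/\Sigma(t)\big)^{1/\eta}$ when $\eta = \theta-1$ (using $\theta-1 = \eta > 0$). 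Since at least one of the two alternatives holds and $y(t)$ is then dominated by the corresponding estimate, $y(t)$ is bounded by the sum of the two, which is exactly the claimed inequality in each of the two cases.

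The one-dimensional integrations and the bookkeeping of the $(\theta,\eta)$-dependent constants are routine. The single point needing care is the $t$-dependence of the weight $\varphi$: one cannot antidifferentiate $\sqrt{\varphi(s)}\,y'(s)/y^{\theta-\eta}(s)$ directly, and the fix is precisely to use monotonicity of $\varphi$ together with $-y' \ge 0$ to replace $\varphi(s)$ by $\varphi(t)$ inside the integral at the cost of only a constant factor. Complementarily, the dichotomy $\max\{J_1,J_2\} \ge \Sigma(t)/2$ is what turns the additive splitting $\sqrt{\varphi y^{2\eta}+R^2} \le \sqrt{\varphi}\,y^\eta + R$ underlying \cref{eq:Ly'} into an additive — rather than coupled — bound on $y(t)$.
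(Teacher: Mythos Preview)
Your proof is correct. The route differs from the paper's in how the two contributions are decoupled. The paper integrates \cref{eq:Ly'} after first forming an exact antiderivative via the product rule---e.g., for $\eta=\theta-1$ it writes $\big(\sqrt{\varphi}\,\ln y\big)'\le\sqrt{\varphi}\,y'/y$ (using $\varphi'\ge0$ and $\ln y\le0$)---to arrive at a single combined inequality $G(\varphi(t),y(t))\ge\Sigma(t)$; it then constructs explicit comparison functions $Y_1,Y_2$ (resp.\ $Y_2,Y_3$) each absorbing $\Sigma(t)/2$ in the corresponding term of $G$, and invokes the monotonicity of $G(\varphi(t),\cdot)$ to deduce $y\le Y_1+Y_2$. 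Your dichotomy $\max\{J_1,J_2\}\ge\Sigma(t)/2$ sidesteps the comparison-function machinery entirely: pulling $\sqrt{\varphi(s)}\le\sqrt{\varphi(t)}$ out of $J_1$ (legitimate since $-y'\ge0$) and evaluating the resulting one-variable integrals directly gives the same two bounds with the same constants. Both arguments rest on the same three facts ($y$ nonincreasing, $y\le1$, $\varphi$ nondecreasing); yours is the more elementary packaging, while the paper's comparison-function device makes the origin of the two terms in the final estimate slightly more transparent.
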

\subsection{Proof of \cref{eq:bk-mu-0}}
\label{sec:mu-0}
In this case, by \cref{eq:gama-betak}, we have $\gamma_k = \gamma_0\beta_{k}$ and \cref{eq:diff-bk} becomes 
\begin{equation}\label{eq:diff-lk-mu-0}
	\beta_{k+1}-\beta_{k} \leq  -\frac{\sqrt{\gamma_{0}}\beta_k\beta_{k+1}}{\sqrt{2\sqrt{2}\beta_kM(\nu,\delta_{k+1})
			+\nm{A}^2}},
\end{equation}
where $M(\nu,\delta_{k+1}) =\delta_{k+1}^{\frac{\nu-1}{\nu+1}}[M_\nu(h)]^{\frac{2}{1+\nu}} $ with $\delta_{k+1} = \beta_{k+1}/(k+1)$.

Define a piecewise continuous linear interpolation
\begin{equation}\label{eq:y}
	y(t): ={}\beta_{k}(k+1-t)+\beta_{k+1}(t-k)\quad\forall\,t\in[k,k+1),
	\, k\in\mathbb N.
\end{equation}
Clearly, $y\in W^{1,\infty}(0,\infty)$ is positive and $0<y(t)\leq y(0) = 1$. In particular, we have $\beta_k = y(k)$  for all $k\in\mathbb N$, and the decay estimate of $\beta_k$ is transferred into the asymptotic behavior of $y(t)$, which satisfies
\begin{equation}\label{eq:bk-yt-mu-0}
	y' (t)\leq  -\frac{\sqrt{\gamma_0}/2\,y^{2}(t)}{\sqrt{8\sqrt{2}\varphi(t)[y(t)]^{\frac{2\nu}{1+\nu}}
			+\nm{A}^2 }},
\end{equation}
where $\varphi(t) := (t+1)^{\frac{1-\nu}{1+\nu}}[M_\nu(h)]^{\frac{2}{1+\nu}}$. Thus, utilizing \cref{lem:est-y} gives
\[
\beta_k=y(k)\leq C_\nu\left(
\frac{\nm{A}}{\sqrt{\gamma_0}k} + \frac{M_\nu(h)}{\gamma_0^{\frac{1+\nu}{2}}k^{\frac{1+3\nu}{2}}}
\right)\quad\forall\,k\geq 1,
\]
where $C_\nu>0$ depends only on $\nu$. This establishes \cref{eq:bk-mu-0}.

Below, let us verify \cref{eq:bk-yt-mu-0}. Since $\gamma_k\leq \max\{\gamma_0,\mu\}\leq\nm{A}^2$, from \cref{eq:ak} we find that 
\[
\alpha_k\leq \sqrt{\gamma_k\beta_k}/\nm{A}
\leq 1\quad\forall\,k\in\mathbb N.
\]
For any $t\in(k,k+1)$, it is clear that 
\[
1\geq \frac{\beta_{k+1}}{y(t)}\geq \frac{\beta_{k+1}}{\beta_{k}}=\frac{1}{1+\alpha_k}\geq \frac{1}{2},\quad\text{and}\quad 1\leq  \frac{\beta_k}{y(t)}\leq \frac{\beta_{k}}{\beta_{k+1}}\leq2,
\]
which implies 
\[
\beta_kM(\nu,\delta_{k+1})= \varphi(k)\beta_k\beta_{k+1}^{\frac{\nu-1}{\nu+1}}
\leq 2^{\frac{2}{1+\nu}}\varphi(t)[y(t)]^{\frac{2\nu}{1+\nu}}.
\]
Since $y'(t) = \beta_{k+1}-\beta_k$, plugging the above estimate into \cref{eq:diff-lk-mu-0} proves \cref{eq:bk-yt-mu-0}.
\subsection{Proof of \cref{eq:bk-mu>0}}
\label{sec:mu>0}
By \cref{eq:gama-betak}, we have $\gamma_{k}\geq \gamma_{\min}  = \min\{\gamma_0,\mu\}$, and \cref{eq:diff-bk} becomes 
\[
\beta_{k+1}-\beta_{k} \leq  -\frac{\sqrt{\gamma_{\min}\beta_k}\beta_{k+1}}{\sqrt{2\sqrt{2}\beta_kM(\nu,\delta_{k+1})
		+\nm{A}^2}}.
\]
Recall the piecewise interpolation $y(t)$ defined by \cref{eq:y}. Similarly with \cref{eq:bk-yt-mu-0}, we claim that 
\[
y' (t)\leq  -\frac{\sqrt{\gamma_{\min}}/2\,y^{3/2}(t)}{\sqrt{8\sqrt{2}\varphi(t)[y(t)]^{\frac{2\nu}{1+\nu}}
		+\nm{A}^2 }},
\]
and invoking \cref{lem:est-y} again gives
\[
\beta_k\leq C_\nu\left\{
\begin{aligned}
	{}&\frac{\nm{A}^2}{\gamma_{\min} k^2}
	+\frac{[M_\nu(h)]^{\frac{2}{1-\nu}}}{\gamma_{\min}^{\frac{1+\nu}{1-\nu}}k^{\frac{1+3\nu}{1-\nu}}}&&\text{ if }\nu<1,\\
	{}&	 \frac{\nm{A}^2}{\gamma_{\min} k^2}+\exp\left(-\frac{k}{8\sqrt{3}}\sqrt{\frac{\gamma_{\min}}{L_h}}\right)&&\text{ if }\nu=1,
\end{aligned}
\right.
\]
which proves \cref{eq:bk-mu>0} and completes the proof of \cref{lem:est-tk}.

\section{Numerical Examples}
\label{sec:num}
In this part, we provide several numerical tests to validate the performance of our \cref{algo:UAPD} (denoted shortly by UAPD). It is compared with Nesterov's FGM \cite{nesterov_universal_2015} and the AccUniPDGrad method  \cite{yurtsever_universal_2015}, respectively for unconstrained and affinely constrained problems.

Both UAPD and FGM involve the proximal mapping of the nonsmooth part $g$ under Bregman distance. However, FGM performs one more proximal calculation for updating $v_k$, and in line search part, FGM and AccUniPDGrad use the tolerance $\delta_k = \epsilon\tau_k$ with $\tau_k = \mathcal O(1/k)$, which is smaller than ours $\delta_k = \beta_k/k$. As discussed previously in \cref{rem:compare-line-search}, this will lead to over-estimate issue, especially for H\"{o}lderian case (cf. \cref{sec:mat-game}) and smooth problems with large Lipschitz constants (cf. \cref{sec:regu-mat-game}).
\subsection{Matrix game}
\label{sec:mat-game}
The problem reads as
\begin{equation}\label{eq:matrix-game}
	\min_{x\in \Delta_n}	\max_{y\in \Delta_m}\dual{x,Py} = 
	\min_{x\in \Delta_n}\left\{h(x): = 	\max_{1\leq j\leq m}\dual{p_j,x}\right\},
\end{equation}
where $P =(p_1,p_2,\cdots,p_m)\in\R^{n\times m}$ is the given payoff matrix and $\Delta_{\times }$ denotes the standard simplex with $\times =m$ or $n$. According to von Neumann’s minimax theorem \cite[Corollary 15.30]{Bauschke2011}, it is also equivalent to 
\[
\begin{aligned}
	\max_{y\in \Delta_m}\left\{\min_{1\leq i\leq n}\dual{e_i,Py}\right\} ={}& -\min_{y\in \Delta_m}\left\{-\min_{1\leq i\leq n}\dual{e_i,Py}\right\}\\
	={}&	-\min_{y\in \Delta_m}\left\{g(y) := \max_{1\leq i\leq n}\dual{q_i,y}\right\},
\end{aligned}
\]
where $P^\top =- (q_1,q_2,\cdots,q_n)\in\R^{m\times n}$. As we do not the know the optimal value of $h$ and $g$, it is more convenient to  consider 
\begin{equation}\label{eq:min-f-g}
	\min_{x\in \Delta_n,\,y\in \Delta_m}\left\{f(x,y): = h(x)+g(y)\right\}.
\end{equation}
Clearly, this problem is nonsmooth ($f$ is only Lipschitz continuous) and the minimal value is zero. A natural prox-function for this problem is the entropy $\phi(x) = \dual{x,\ln x}$.

\begin{figure}[H]
	\centering
	\includegraphics[width=10.5cm]{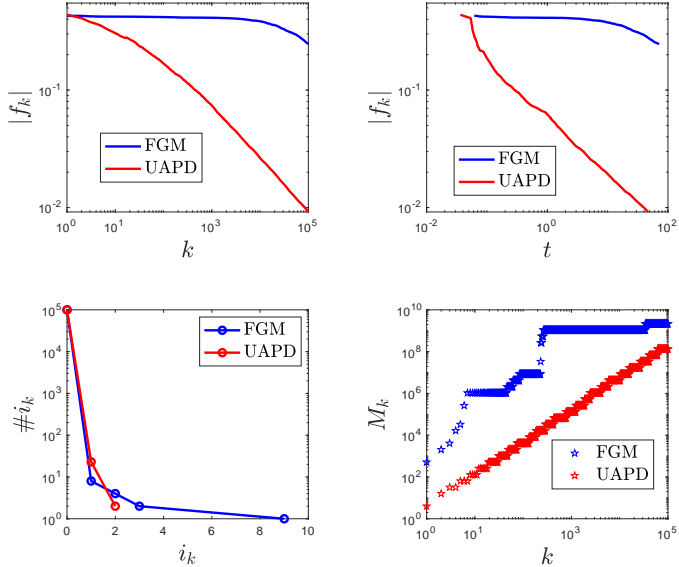}
	\caption{Numerical performances of FGM and UAPD on the matrix game problem with $m = 100,\,n = 400$.}
	\label{fig:matrix-game-randn}
\end{figure}
We record (i) the decay behavior of the objective residual $|f_k| = |f(x_k)|$ (with respect to iteration number $k$ and running time $t$ in seconds), (ii) the total number $\#i_k$ of the line search step $i_k$, and (iii) the approximate Lipschitz constant $M_k$. The pay off matrix $P$ is generated from normal distribution and for FGM, we set the accuracy parameter $\epsilon = $1e-5. 

Numerical results are displayed in \cref{fig:matrix-game-randn}, from which we see that our UAPD outperforms FGM, with faster convergence and smaller Lipschitz constants. The total number $\#i_k$ is close to each other. But, FGM produces over-estimated Lipschitz parameters with dramatically growth behavior since it adopts smaller  tolerance $\epsilon/k$ for line search procedure.

\begin{figure}[H]
	\centering
	\includegraphics[width=12.5cm]{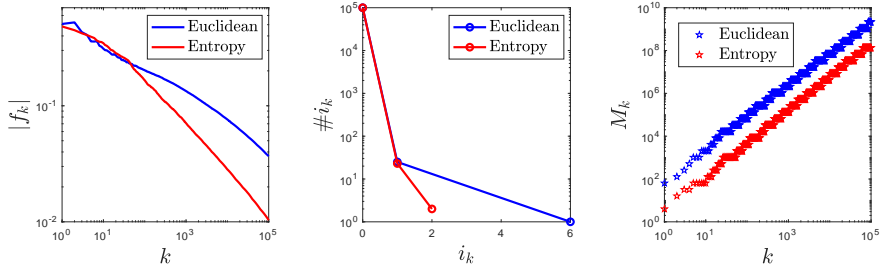}
	\caption{Numerical results of UAPD on the matrix game problem with different prox-functions.}
	\label{fig:matrix-game-prox-randn}
\end{figure}
Besides, we investigate the difference between Euclidean distance $\phi(x) = 1/2\nm{x}^2$ and entropy function $\phi(x) = \dual{x,\ln x}$. It is observed that these two cases are very similar in line search procedure but entropy function leads to better convergence rate. 
\begin{figure}[H]
	\centering	
	\includegraphics[width=10.5cm]{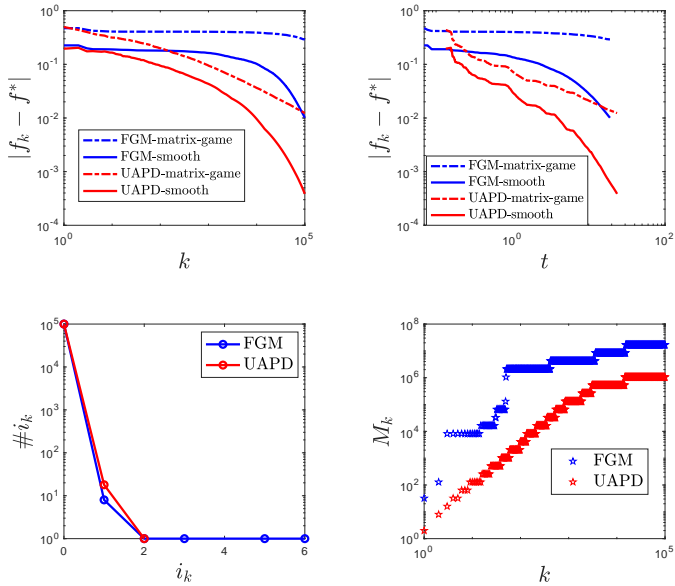}
	\caption{Numerical performances of FGM and UAPD on regularized matrix game with $m = 100,\,n = 400$.}
	\label{fig:matrix-game-smooth-randn}
\end{figure}
\subsection{Regularized matrix game problem}
\label{sec:regu-mat-game}
The problem \cref{eq:matrix-game} admits an approximation
\begin{equation}\label{eq:regu-f}
	f_\sigma(x): = \sigma\ln\left(\sum_{j=1}^{m}e^{\dual{p_j,x}/\sigma}\right),
\end{equation}
where $\sigma>0$ denotes the smoothing parameter.  This regularized objective is smoother than the original one. According to \cite[Eq.(4.8)]{nesterov_smooth_2005}, we choose $\sigma = \epsilon/(2\ln m)$, and the Lipschitz constant of $\nabla f_\sigma$ is $L_\sigma =\max_{i,j}|P_{i,j}|^2/(4\sigma)$.

We then apply UAPD and FGM (with $\epsilon = $1e-5) to the smooth problem \cref{eq:regu-f} and report the numerical outputs in \cref{fig:matrix-game-smooth-randn}. The optimal value $f^* $ is obtained by running UAPD with enough iterations. Similarly as before, our UAPD is superior to FGM in convergence and approximate Lipschitz constant. Also, we plot the objective residuals of the original matrix game and find that with smoothing technique both two methods perform better than before.

\subsection{Continuous Steiner problem}
Let us consider one more unconstrained problem
\begin{equation}\label{eq:min-stein}
	\min_{x\in \R^n_+}\,f(x) = \sum_{j=1}^{m}\nm{x-a_j},
\end{equation}
where $a_j\in\R^n$ denotes a given location. Note that the objective is actually quite smooth far away from each location $a_j$. We generate $a_j$ from normal distribution and run UAPD with enough iterations to obtain an approximated optimal value $f^*$. Numerical results in \cref{fig:Steiner-rand} show that both FGM (with $\epsilon = $1e-8) and UAPD work well and possess similar convergence behaviors. Moreover, as $\nabla f$ is almost Lipschitz continuous and the magnitude of the Lipschitz constant $L$ is not so large, the over-estimated issue of FGM is negligible, and the approximated constant $M_k$ is the same as that of UAPD.

\begin{figure}[H]
	\centering
	\includegraphics[width=10.5cm]{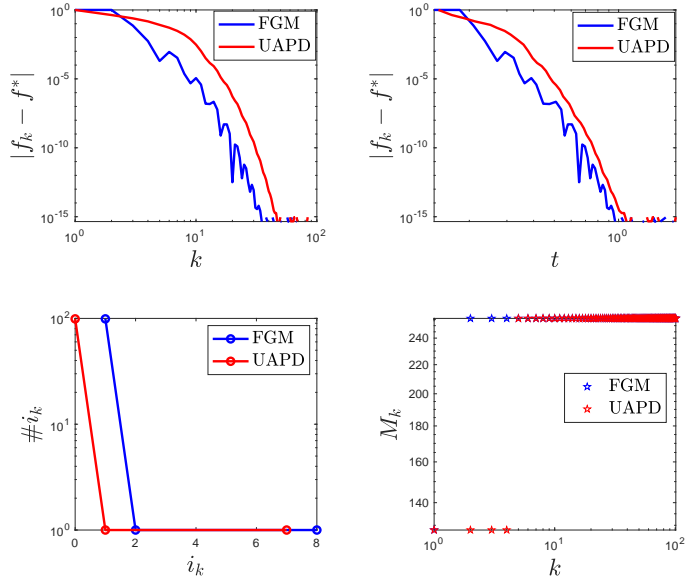}
	\caption{Numerical performances of FGM and UAPD on the continuous Steiner problem with $m = 800,\,n = 400$.}
	\label{fig:Steiner-rand}
\end{figure}
\subsection{Basis pursuit problem}
In the last example, we move to the basis pursuit problem
\[
\min_{x\in \R^n}\nm{x}_1\quad{\rm s.t.}\,Ax=b,
\]
where $A\in \R^{m\times n}$ and $b\in\R^m$. To be compatible with the problem setting of AccUniPDGrad, we consider an equivalent formulation
\[
\min_{x\in \R^n}\frac{1}{2}\nm{x}_1^2\quad{\rm s.t.}\,Ax=b.
\]
The dual problem reads as 
\[
\min_{\lambda\in \R^{m}}\,\left\{\varphi(\lambda): = \dual{b,\lambda}+\frac{1}{2} \big\|A^\top \lambda\big\|_\infty^2\right\}.
\]

Note that existing accelerated Bregman method \cite{huang_accelerated_2013} and accelerated ALM \cite{xu_accelerated_2018} can be applied to this problem with theoretical rate $\mathcal O(1/k)$. But we only focus on the comparison between UAPD and AccUniPDGrad \cite{yurtsever_universal_2015}, as black-box type methods with line search procedure. We mention that the AccUniPDGrad method  also uses smaller tolerance $\epsilon/k$ as that in FGM.
Numerical results are showed in \cref{fig:basis}, which indicate that (i) our UAPD has smaller objective residual and feasibility violation, and (ii) the line search procedure is more efficient with smaller total number $\#i_k$ and Lipschitz constant $M_k$. 
\begin{figure}[H]
	\centering
	\includegraphics[width=13cm]{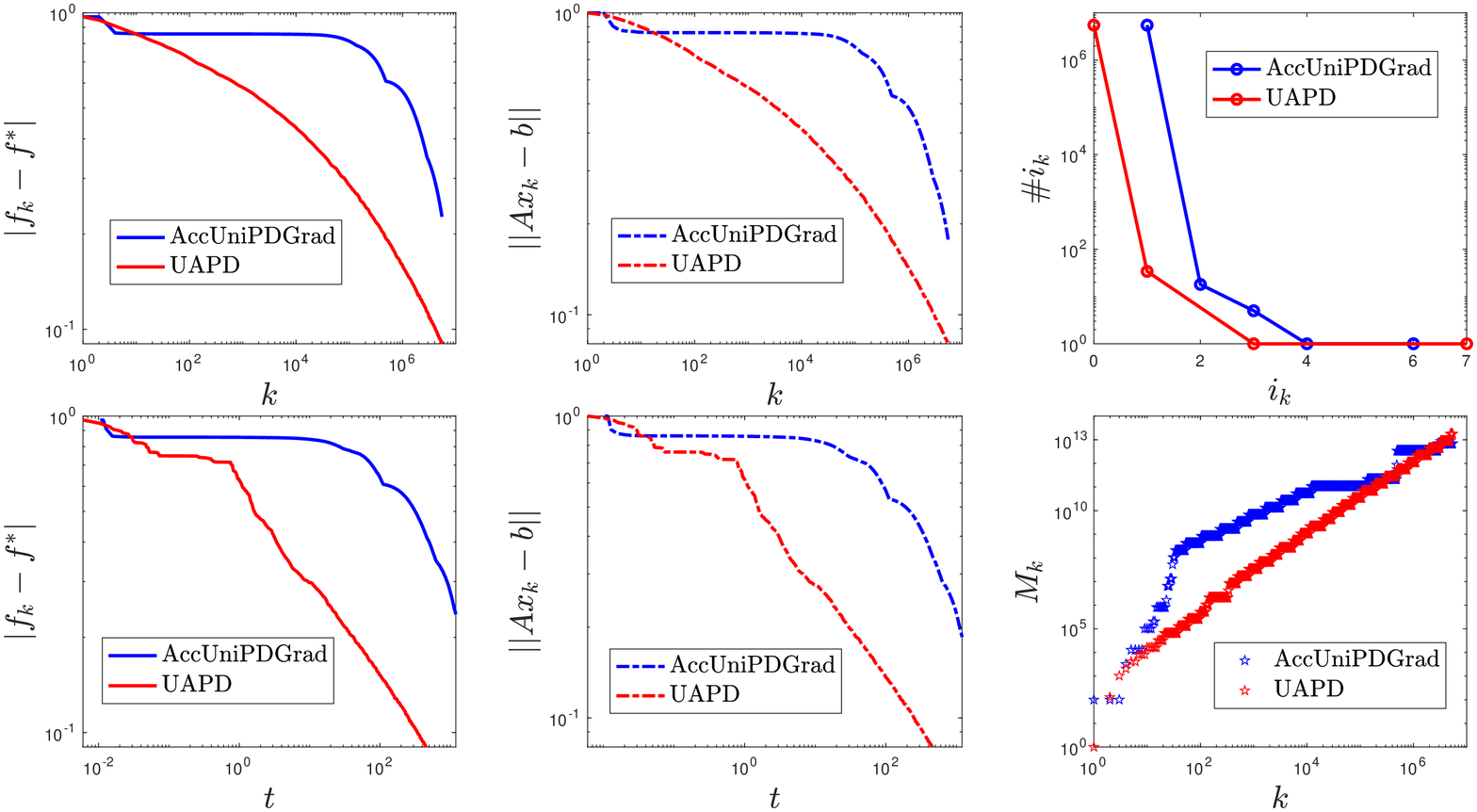}
	\caption{Numerical performances of AccUniPDGrad and UAPD on the basis pursuit problem with $m = 100,\,n = 500$. The desired accuracy for AccUniPDGrad is $\epsilon = 1$e-3.}
	\label{fig:basis}
\end{figure}

\appendix
\section{Proof of \cref{lem:Mk}}
\label{sec:app-Mk}
Let us first prove \cref{eq:Mk}. Recall that $i_k$ is the smallest integer such that 
\[
h(x_{k,i_k})-\Delta_{k,i_k}\leq\frac{\delta_{k,i_{k}}}{2}.
\]
If $i_k = 0$, then $M_{k+1} = M_{k}$. If $i_k\geq 1$, then we claim that 
\begin{equation}\label{eq:bd-Mkik}
	M_{k,i_k}\leq 2M(\nu,\delta_{k,i_{k}-1}).
\end{equation}
Otherwise, we have $M_{k,i_{k}-1} = M_{k,i_k}/2>M(\nu,\delta_{k,i_{k}-1})$. According to \cref{prop:M-delta}, this implies immediately that 
\[
h(x_{k,i_{k}-1})-\Delta_{k,i_{k}-1}\leq \frac{\delta_{k,i_{k}-1}}{2},
\]
which yields a contradiction and thus verifies the estimate \cref{eq:bd-Mkik}. Additionally, by \cref{eq:ak-ik}, we have $\alpha_{k,i_k}\leq \alpha_{k,i_{k}-1}\leq \sqrt{2}\alpha_{k,i_{k}}$. Thus, using \cref{eq:M-nu-delta,eq:deltak-ik,eq:bd-Mkik} leads to
\begin{equation}\label{eq:bd-Mkik-1}
	M_{k+1} = M_{k,i_k}\leq 2\sqrt{2}M(\nu,\delta_{k,i_{k}})
	=2\sqrt{2}M(\nu,\delta_{k+1}).
\end{equation}
This implies that for all $k\geq 0$, we have
\begin{equation}\label{eq:Mk-bd-1}
	M_{k+1}\leq 	\max\left\{2\sqrt{2}M(\nu,\delta_{k+1}), \,M_{k}\right\}.
\end{equation}
Note that $\delta_k = \beta_k/k$ and $\beta_k$ is decreasing. Thus $\delta_{i}\geq \delta_{k}$ and $M(\nu,\delta_{i})\leq M(\nu,\delta_{k+1})$ for all $1\leq i\leq k$, this indicates that 
\[
M_{k+1}\leq 	\max\left\{2\sqrt{2}M(\nu,\delta_{k+1}), M_{i}\right\}\quad\forall\,1\leq i\leq k.
\]
Taking $i=1$ and using \cref{eq:Mk-bd-1}  with $k=0$, we obtain
\[
M_{k+1}\leq 	\max\left\{ 2\sqrt{2}M(\nu,\delta_{k+1}), M_{0}\right\},
\]
which proves \cref{eq:Mk}.

Then, let us verify \cref{eq:bd-ik}. Observing that 
\[
M_{k+1} = M_{k,i_k} = 2^{i_k-1}M_{k}\quad\Longrightarrow\quad i_k = 1+\log_2\frac{M_{k+1}}{M_{k}},
\]
we get
\[
\sum_{j=0}^{k}i_j= k+1+	\log_2\frac{M_{k+1}}{M_{0}}\leq k+1+\max\left\{1, \log_2\frac{M(\nu,\delta_{k+1})}{M_{0}/2\sqrt{2}}\right\}.
\]
Since $M(\nu,\delta_{k+1}) = \delta_{k+1}^{\frac{\nu-1}{\nu+1}}[M_\nu(h)]^{\frac{2}{\nu+1}}$, we complete the proof of \cref{lem:Mk}.
\section{Accelerated Bregman Primal-Dual Flow}
\label{app:abpd}
Recall the conjugate function
\[
\phi^*(\xi) := \sup_{x\in Q}\left\{
\dual{\xi,x}-\phi(x)
\right\}\quad\forall\,\xi\in\R^n.
\]
We have the relation: $\xi=\nabla \phi(x)~\Longleftrightarrow~ x=\nabla \phi^*(\xi)$; see \cite[Theorem 16.23]{Bauschke2011}.
Therefore, by introducing $w = \nabla \phi(v)$, we obtain an alternative first-order formulation of \cref{eq:apd-sys-app}:
\begin{equation}\label{eq:apd-sys-app-equi}
	\left\{
	\begin{aligned}
		{}&x' = \nabla\phi^*(w)-x,\\
		{}&\gamma w'{}\in \mu (\nabla\phi(x)-w)-\big(\partial f(x)+N_Q(x)+A^\top \lambda\big),\\
		{}&\beta \lambda' {}= A\nabla\phi^*(w)-b.
	\end{aligned}
	\right.
\end{equation}
\subsection{Well-posedness and exponential decay}
Let us focus on the smooth setting: $Q = \R^n$ and $f\in C^1$ satisfies 
\[
f(x)\geq f(y)+\dual{\nabla f(y),x-y}+\mu D_\phi(x,y)\quad\forall\,x,\,y\in\R^n,
\]
with $\mu\geq 0$.
Then our ABPD flow dynamics \cref{eq:apd-sys-app} becomes 
\begin{subnumcases}{\label{eq:apd-sys-app-smooth}}
	\label{eq:apd-sys-app-smooth-x}
	{}	x' = v-x,\\
	\label{eq:apd-sys-app-smooth-v}		
	{}		\gamma \frac{\dd}{\dd t} \nabla \phi(v){}= \mu (\nabla\phi(x)-\nabla\phi(v))-\nabla f(x)-A^\top \lambda,\\
	\label{eq:apd-sys-app-smooth-l}
	{}		\beta \lambda' {}= Av-b.
\end{subnumcases}
By \cref{eq:apd-sys-app-equi}, this is also equivalent to 
\begin{equation}\label{eq:apd-sys-flow}
	\left\{
	\begin{aligned}
		{}&x' = \nabla\phi^*(w)-x,\\
		{}&\gamma w'{}= \mu (\nabla\phi(x)-w)-\nabla f(x)-A^\top \lambda,\\
		{}&\beta \lambda' {}= A\nabla\phi^*(w)-b.
	\end{aligned}
	\right.
\end{equation}

Recall that $\gamma$ and $\beta$ are governed by \cref{eq:gama-beta}, which actually admits explicit solutions
\[
\beta(t) = \beta_0e^{-t},\quad \gamma(t) = \mu + (\gamma_{0}-\mu)e^{-t}.
\]
Since $\phi$ is $1$-strongly convex (cf. \cref{eq:D-phi}), $\nabla\phi^*$ is $1$-Lipschitz continuous. Consequently, if both $\nabla f$ and $\nabla \phi$ are Lipschitz continuous, then by standard theory of ordinary differential equations, we conclude that the dynamical system \cref{eq:apd-sys-flow} admits a unique classical $C^1$ solution $(x,w,\lambda)$. This also promises that our ABPD flow \cref{eq:apd-sys-app-smooth} exists a unique solution $(x,v,\lambda)$ with $v = \nabla\phi^*(w)$ being continuous.

We then introduce a Lyapunov function
\begin{equation}\label{eq:Et}
	\mathcal E(x,v,\lambda) :=\mathcal L(x,\lambda^*)-\mathcal L(x^*,\lambda)+\gamma D_\phi(x^*,v)+\frac{\beta}{2}\nm{\lambda-\lambda^*}^2,
\end{equation}
which is a continuous analogue to the discrete one \cref{eq:Ek}. 
\begin{thm}
	\label{eq:exp-ABG}
	Let $(x,v,\lambda)\in C^1(\R_+;\R^n)\times C^0(\R_+;\R^n)\times C^1(\R_+;\R^m)$ be the unique solution to the ABPD flow \cref{eq:apd-sys-app-smooth}. Then we have
	\begin{equation}\label{eq:dt-E}
		\frac{\dd }{\dd t}			\mathcal E(x,v,\lambda) 
		\leq  
		-	\mathcal E(x,v,\lambda) -\mu 	D_\phi(v,x),
	\end{equation}
	which implies the exponential decay rate
	\[
	e^t\mathcal E(x(t),v(t),\lambda(t))  + \mu\int_0^te^{s}D_\phi(v(t),x(t))\dd s
	\leq  
	\mathcal E(x_0,v_0,\lambda_0),
	\]
	for all $t\geq 0$.
\end{thm}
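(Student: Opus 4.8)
The goal is to prove the differential inequality \cref{eq:dt-E} for the Lyapunov function $\mathcal E(x,v,\lambda)$ along the ABPD flow \cref{eq:apd-sys-app-smooth}; the exponential decay bound then follows by multiplying through by $e^t$, recognizing the left side as $\frac{\dd}{\dd t}\big(e^t\mathcal E\big)$, and integrating from $0$ to $t$. So the entire content is in \cref{eq:dt-E}. I would differentiate $\mathcal E$ term by term, exactly mirroring the three-term split $\mathbb I_1+\mathbb I_2+\mathbb I_3$ used in the proof of \cref{lem:diff-Lk}, but now in continuous time where difference quotients become derivatives and the three-term identity \cref{eq:3-term-id} is replaced by its infinitesimal form.

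\textbf{Step 1: the primal/Lagrangian term.} Write $\frac{\dd}{\dd t}\big[\mathcal L(x,\lambda^*)-\mathcal L(x^*,\lambda)\big] = \dual{\nabla f(x)+A^\top\lambda^*,x'} - \dual{Ax^*-b,\lambda'}$. Using $Ax^*=b$ the second piece vanishes, and using \cref{eq:apd-sys-app-smooth-x} this becomes $\dual{\nabla f(x)+A^\top\lambda^*, v-x}$. By $\mu$-convexity of $f$ and convexity in the affine part, bound $\dual{\nabla f(x),v-x}$ and $\dual{\nabla f(x),x^*-x}$ appropriately; the key inequalities are $\dual{\nabla f(x),x-x^*}\ge f(x)-f(x^*)+\mu D_\phi(x^*,x)$ and $f(x)-f(v)\le\dual{\nabla f(x),x-v}-\mu D_\phi(v,x)$.

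\textbf{Step 2: the Bregman term.} Compute $\frac{\dd}{\dd t}\big[\gamma D_\phi(x^*,v)\big] = \gamma' D_\phi(x^*,v) + \gamma\frac{\dd}{\dd t}D_\phi(x^*,v)$. For the second piece, since $\frac{\dd}{\dd t}D_\phi(x^*,v) = -\dual{\frac{\dd}{\dd t}\nabla\phi(v), x^*-v}$, substitute the dynamics \cref{eq:apd-sys-app-smooth-v}: $\gamma\frac{\dd}{\dd t}\nabla\phi(v) = \mu(\nabla\phi(x)-\nabla\phi(v)) - \nabla f(x) - A^\top\lambda$. This produces a term $\mu\dual{\nabla\phi(x)-\nabla\phi(v),v-x^*}$, which by the exact three-term identity \cref{eq:3-term-id} equals $\mu[D_\phi(x^*,v)-D_\phi(x^*,x)-D_\phi(v,x)]$ — wait, more precisely $\mu[D_\phi(x^*, \cdot)]$ with the right signs; this is where the $-\mu D_\phi(v,x)$ on the right of \cref{eq:dt-E} will come from. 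It also produces the cross terms $\dual{\nabla f(x)+A^\top\lambda, v-x^*}$ which pair against Step 1's $\dual{\nabla f(x)+A^\top\lambda^*,v-x}$ and Step 3's dual contribution. Use $\gamma' = \mu-\gamma$ from \cref{eq:gama-beta} to handle the $\gamma'D_\phi$ term.

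\textbf{Step 3: the dual penalty term and assembly.} Compute $\frac{\dd}{\dd t}\big[\frac{\beta}{2}\nm{\lambda-\lambda^*}^2\big] = \frac{\beta'}{2}\nm{\lambda-\lambda^*}^2 + \beta\dual{\lambda',\lambda-\lambda^*}$; use $\beta'=-\beta$ and $\beta\lambda' = A\nabla\phi^*(w)-b = Av-b$ (since $v=\nabla\phi^*(w)$), giving $-\frac{\beta}{2}\nm{\lambda-\lambda^*}^2 + \dual{Av-b,\lambda-\lambda^*}$. Then add the three pieces. The $-\gamma D_\phi(x^*,v)$, $-\mu D_\phi(x^*,x)$ (assembled from pieces of $\dual{\nabla f(x),\cdot}$ and the $\mu$-three-term expansion), $-\frac{\beta}{2}\nm{\lambda-\lambda^*}^2$, and the residuals $f(x)-f(x^*)$, $\dual{\lambda^*,Ax-b}$, $-\dual{\lambda,Ax^*-b}$ should reconstruct exactly $-\mathcal E(x,v,\lambda)$, while all the cross terms involving $\nabla f(x)$, $A^\top\lambda$, $A^\top\lambda^*$, and $Av-b$ must cancel, leaving only the leftover $-\mu D_\phi(v,x)$. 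This cancellation is the continuous analogue of the telescoping in \cref{sec:I3-}, and it is essentially bookkeeping since here there is no discretization error (no $M_{k+1}$, no $\delta_{k+1}$, no $D_\phi(v_{k+1},v_k)$ remainder).

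\textbf{Main obstacle.} The calculus and dynamics substitutions are routine; the only real care is the sign and direction bookkeeping in matching the $\mu$-convexity inequalities of $f$ against the $\mu$-terms generated by \cref{eq:apd-sys-app-smooth-v}, and in verifying that the affine cross terms $\dual{A^\top\lambda,v-x^*}$, $\dual{A^\top\lambda^*,v-x}$, $\dual{Av-b,\lambda-\lambda^*}$ and the residual Lagrangian pieces telescope cleanly to reproduce $-\mathcal E$ with no leftover constraint term. I would organize this by writing $\mathcal L(x,\lambda^*)-\mathcal L(x^*,\lambda) = f(x)-f(x^*)+\dual{\lambda^*,Ax-b}-\dual{\lambda,Ax^*-b}$ at the outset so every Lagrangian quantity is in a form whose derivative is transparent, then tracking the $A$-dependent terms in a single grouped display to confirm they sum to zero. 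Once \cref{eq:dt-E} is established, integrating $\frac{\dd}{\dd t}(e^t\mathcal E) \le -\mu e^t D_\phi(v,x) \le 0$ and using $D_\phi(v,x)\ge 0$ gives the stated bound immediately.
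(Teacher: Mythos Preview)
Your proposal is correct and follows essentially the same route as the paper's proof: differentiate $\mathcal E$ term by term, substitute the dynamics \cref{eq:apd-sys-app-smooth-v} into $\gamma\frac{\dd}{\dd t}D_\phi(x^*,v)=\big\langle\gamma\frac{\dd}{\dd t}\nabla\phi(v),v-x^*\big\rangle$, apply the three-term identity \cref{eq:3-term-id} to $\mu\dual{\nabla\phi(x)-\nabla\phi(v),v-x^*}$, and use $\mu$-convexity of $f$ together with $\gamma'=\mu-\gamma,\ \beta'=-\beta$ to close the estimate (the paper abbreviates the final bookkeeping by citing \cite[Lemma 2.1]{luo_acc_primal-dual_2021}). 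Your integration step is exactly right: $\frac{\dd}{\dd t}(e^t\mathcal E)\le -\mu e^t D_\phi(v,x)$ integrated over $(0,t)$ gives the stated bound.
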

\begin{proof}
	Taking the derivative with respect to the time variable gives
	\[
	\begin{aligned}
		\frac{\dd }{\dd t}\mathcal E(x,v,\lambda) = {}&\dual{\nabla_x\mathcal L(x,\lambda^*),x'} +
		\gamma'D_\phi(x^*,v)
		+\gamma\frac{\dd }{\dd t}D_\phi(x^*,v)\\
		{}&\quad +\frac{\beta'}{2}\nm{\lambda-\lambda^*}^2+\beta\dual{\lambda-\lambda^*,\lambda'}.
	\end{aligned}
	\]
	Since $w = \nabla\phi(v)\in C^1(\R_+;\R^n)$,  we see that $D_\phi(x^*,v)$ is continuous differentiable in terms of $t$ and by \eqref{eq:apd-sys-app-smooth-v}, we have
	\[
	\begin{aligned}
		{}&\gamma\frac{\dd }{\dd t}D_\phi(x^*,v)=\dual{\gamma\frac{\dd }{\dd t}\nabla\phi(v) ,v-x^*}\\
		={}&\mu\dual{\nabla \phi(x) - \nabla \phi(v)  ,v-x^*}
		-\dual{\nabla f(x) +A^\top\lambda,v-x^*}.
	\end{aligned}
	\]
	Then using the three-term identity \cref{eq:3-term-id} and following the proof of \cite[Lemma 2.1]{luo_acc_primal-dual_2021}, we can verify \cref{eq:dt-E} and complete the proof.
\end{proof}
\section{Proof of \cref{lem:est-y}}
\label{sec:app-pf}
\subsection{The case  $\eta=\theta-1$}
The estimate \cref{eq:Ly'} becomes
\begin{equation}\label{eq:y'-case1}
	\sqrt{\varphi(t)}\frac{y'(t)}{y(t)}+R\frac{y'(t)}{y^{\theta}(t)}\leq -\sigma(t).
\end{equation}
Since $y(0) =  1$ and $y'(t)\leq 0$, it holds that $0<y(t)\leq 1$ for all $t\geq 0$. As $\varphi(t)$ is positive and nondecreasing, we obtain
\[
\left(\sqrt{\varphi}\ln y\right)' = \frac{\varphi'}{2\sqrt{\varphi}}\ln y + \sqrt{\varphi} \frac{y'}{y}\leq \sqrt{\varphi} \frac{y'}{y}.
\]
Combining this with \cref{eq:y'-case1} gives
\[
\begin{aligned}
	\left(\sqrt{\varphi(t)}\ln y(t)+\frac{R}{1-\theta}y^{1-\theta}(t)\right)'\leq -\sigma(t),
\end{aligned}
\]
and integrating over $(0,t)$ leads to
\begin{equation}\label{eq:est-y-case1-mid}
	\sqrt{\varphi(t)}\ln \frac{1}{y(t)}+\frac{R}{\theta-1}\left(y^{1-\theta}(t)-1\right)\geq \int_{0}^{t}\sigma(s)\dd s=\Sigma(t).
\end{equation}

Define 
\begin{equation}\label{eq:Y1-Y2}
	Y_1(t) := {}\exp\left(-\frac{\Sigma(t)}{2\sqrt{\varphi(t)}}\right)
	\quad\text{and}\quad 
	Y_2(t) :={}  \left(1+\frac{\theta-1}{2R}\Sigma(t)\right)^{\frac{1}{1-\theta}}.
\end{equation}
Then one finds that
\[
\left\{
\begin{aligned}
	{}&\sqrt{\varphi(t)}\ln \frac{1}{Y_1(t)}=\frac{1}{2}\Sigma(t),&&Y_1(0) = 1,\\
	{}&\frac{R}{\theta-1}\left(Y_2^{1-\theta}(t)-1\right)
	=\frac{1}{2}\Sigma(t),&&Y_2(0) = 1.
\end{aligned}
\right.
\]
This also implies 
\begin{equation}\label{eq:est-Y}
	\sqrt{\varphi(t)}\ln \frac{1}{Y(t)}+\frac{R}{\theta-1}\left(Y^{1-\theta}(t)-1\right)
	\leq\Sigma(t),
\end{equation}
where $Y(t) := Y_1(t)+Y_2(t)$.
For fixed $t>0$, the function
\[
v\to \sqrt{\varphi(t)}\ln \frac{1}{v}+\frac{R}{\theta-1}\left(v^{1-\theta}-1\right)
\]
is monotonously  decreasing in terms of $v\in(0,\infty)$. Collecting \cref{eq:est-y-case1-mid,eq:est-Y} yields that
\[
y(t)\leq Y(t)=\exp\left(-\frac{\Sigma(t)}{2\sqrt{\varphi(t)}}\right)+
\left(1+\frac{\theta-1}{2R}\Sigma(t)\right)^{\frac{1}{1-\theta}}.
\]
This completes the proof of \cref{lem:est-y} with $\eta=\theta-1$. 
\subsection{The case  $\eta<\theta-1$}
The proof is in line with the previous case. 
We have
\[
\left(\frac{\sqrt{\varphi}y^{\eta+1-\theta}}{\eta+1-\theta}\right)' = 
\frac{\sqrt{\varphi}y'}{y^{\theta-\eta}} + \frac{y^{\eta+1-\theta}}{\eta+1-\theta}\cdot\frac{\varphi'}{2\sqrt{\varphi}}
\leq \frac{\sqrt{\varphi}y'}{y^{\theta-\eta}} ,
\]
which together with \cref{eq:Ly'} gives
\[
\left(\frac{\sqrt{\varphi(t)}y^{\eta+1-\theta}(t)}{\eta+1-\theta}+\frac{Ry^{1-\theta}(t)}{1-\theta}\right)'\leq -\sigma(t)\quad\Longrightarrow\quad G(\varphi(t),y(t))\geq \Sigma(t),
\]
where $G:[0,\infty)\times (0,\infty)\to \R$ is define by
\[
G(w,v): = \frac{\sqrt{w}}{\theta-\eta-1}(v^{\eta+1-\theta}-1)+\frac{R}{\theta-1}(v^{1-\theta}-1),
\]
for all $w\geq0$ and $v>0$. In addition to $Y_2(t)$ defined in \cref{eq:Y1-Y2}, we introduce
\[
Y_3(t) :=  \left(1+\frac{\theta-\eta-1}{2\sqrt{\varphi(t)}}\Sigma(t)\right)^{\frac{1}{\eta+1-\theta}}
.
\]
Since $G(w,\cdot)$ is monotonously decreasing and 
\[
G(\varphi(t),Y_2(t)+Y_3(t))\leq \Sigma(t)\leq G(\varphi(t),y(t)),
\]
we obtain 
\[
y(t)\leq\left(1+\frac{\theta-1}{2R}\Sigma(t)\right)^{\frac{1}{1-\theta}}+ \left(1+\frac{\theta-\eta-1}{2\sqrt{\varphi(t)}}\Sigma(t)\right)^{\frac{1}{\eta+1-\theta}} .
\]
This concludes the proof of \cref{lem:est-y} with $\eta<\theta-1$.

	\bibliographystyle{abbrv}

\end{document}